\newcounter{bocal}
\newcommand{\res}{\operatorname{res}}
\newcommand{\isom}{\cong}
\newcommand{\tw}{\mathcal{TW}}
\def\ttmt(#1,#2,#3,#4)%
\newcommand{\pair}[1]{\langle #1 \rangle}
\renewcommand{\aa}{\alpha}
\newcommand{\bb}{\beta}
\newcommand{\p}{^\prime}
\newcommand{\abn}[3]{\begin{array}{l}
#1\\#2\\#3\end{array}}
\newcommand{\bbn}[3]{\begin{array}{c}
a= #1,~b= #2\\ \sigma=#3\end{array}}
\newcommand{\cbn}[3]{\begin{array}{l}
#2\\#3\end{array}}
\newcommand{\dbn}[3]{#3}
\newcommand{\ebn}[2]{\begin{array}{c}
#1\\ \sigma=#2\end{array}}
\newcommand{\Q}{\mathbb{Q}}
\renewcommand{\O}{\mathcal{O}}
\newcommand{\CC}{\mathbb{C}}
\newcommand{\PP}{\mathbb{P}}
\newcommand{\QQ}{\mathbb{Q}}
\newcommand{\RR}{\mathbb{R}}
\newcommand{\ZZ}{\mathbb{Z}}
\newcommand{\cO}{\mathcal{O}}
\newcommand{\scV}{\mathcal{V}}
\newcommand{\dtot}{{d/dt}}
\newcommand{\upH}{\mathcal{H}}
\newcommand{\Sh}{\mathcal{S}h}
\newcommand{\sltr}{\operatorname{SL}_2(\mathbb{R})}
\newcommand{\sltz}{\operatorname{SL}_2(\mathbb{Z})}
\newcommand{\vstar}[1]{V_{#1}^\ast}
\theoremstyle{plain}
\newtheorem{theorem}{Theorem}[section]
\newtheorem{proposition}[theorem]{Proposition}
\newtheorem{lemma}[theorem]{Lemma}
\theoremstyle{definition}
\newtheorem{definition}[theorem]{Definition}
\theoremstyle{remark}
\numberwithin{equation}{section}
\begin{document}

\title[Picard-Fuchs equations of families of QM abelian surfaces]
{Picard-Fuchs equations of families of QM abelian surfaces}

\author{Amnon Besser}
\address{School Of Mathematical And Statistical Sciences\\
Arizona State University\\
PO Box 871804, Tempe, AZ, 85287-1804\\
USA
}
\curraddr{
Department of Mathematics\\
Ben-Gurion University of the Negev\\
P.O.B. 653\\
Be'er-Sheva 84105\\
Israel
}

\author{Ron Livn\'e}
\address{Einstein Institute of Mathematics\\
Edmond J. Safra Campus, Givat Ram\\
The Hebrew University of Jerusalem\\
Jerusalem, 91904\\
Israel}

\begin{abstract}
We describe an algorithm for computing the Picard-Fuchs equation for a
family of twists of a fixed elliptic surface. We then apply this
algorithm to obtain the equation for several examples, which are
coming from families of Kummer surfaces over Shimura curves, as
studied in our previous work. We use this to find correspondenced
between the parameter spaces of our families and Shimura curves. These
correspondences can sometimes be proved rigorously.
\end{abstract}

\subjclass[2010]{Primary: 14D07, 14J27; secondary: 14G35, 14J28}

\keywords{Picard-Fuchs equations, Elliptic surfaces, Shimura curves}

\maketitle

\section{Introduction}
\label{sec:intro}

Let $\pi: X\to \PP^1$ be a family of complex algebraic varieties. As
$s\in \PP^1 $ varies, the periods of the fibers $X_s$, i.e., integrals
of holomorphically varying differential forms against a topologically
constant family of homology classes, satisfies a certain differential
equation, known as the \emph{Picard-Fuchs} equation, whose coefficients
are rational functions. These equations and their power series
solutions are interesting in several respects.

Many of the previously studied examples where of families of elliptic
curves with some extra structure. In hope of finding new applications,
we decided to study the related families associated with Shimura
curves.

Recall that if $B$ is a division quaternion algebra over the field
$\QQ$ of rational numbers, which is unramified at $\infty$ in the
sense that $B\otimes_\QQ \RR\isom \operatorname{M}_{2\times 2}
(\RR)$, and if $\mathcal{M}$ is a maximal order in $B$, the group
$\Gamma$ of norm one elements in $\mathcal{M}$ embedds in $\sltr$, via the
embedding of $B$ in $\operatorname{M}_{2\times 2} (\RR)$, as a
discrete subgroup. The quotients of the complex upper half plane
$\upH$ by $\Gamma$, and more generally by finite subgroups
$\Gamma'\subset \Gamma$, are algebraizable as moduli spaces of abelian
surfaces whose endomorphism algebras are certain orders in
$\mathcal{M}$ (so called QM abelian surfaces), together with some
extra structures. Under mild
assumptions they carry a universal family of such abelian
surfaces. Since K3 surfaces are easier to write down, it is natural to
consider instead the universal family of the associated Kummer
surfaces obtained by taking the quotient under multiplication by $\pm 1$. 

In trying to write explicit equations for QM Kummer surfaces, we were
led to study in~\cite{Bes-Liv10} families of quadratic twists of a fixed
elliptic surface (see Section~\ref{sec:elliptic}). We identified 11
examples as related to QM Kummer surfaces.
In each of these
examples we have a family of varieties over $\PP^1$ for which we know
that the generic fiber is isogenous, in a possibly complicated way, to
a Kummer surface associated with a QM abelian surface. Consequently,
there is a correspondence between the base spaces for these families,
and a Shimura curve. In~\cite{Bes-Liv10} we carried out a detailed
analysis of this correspondence, which was highly involved and required
a delicate study of finite discriminant forms.

The main contribution of the present work is Algorithm~\ref{mainalg}
which computes the Picard-Fuchs equation for a family of twists of a
fixed elliptic surface, together with Theorem~\ref{mainthm} that
claims its validity. A nice feature of our alogorithm is that it only
requires knowledge of the Picard-Fuchs equation of the elliptic
surface being twisted. As input to our algorithm we thus need a method
for computing the Picard-Fuchs equation for an elliptic surface. We
describe an algorithm for doing this borrowed from a MAPLE script of
F. Beukers. We prove that the algorithm works since we have found no
record of this in the literature.

After describing the algorithm and the proof of
the main theorem, we apply the algorithm to the study of the families
of QM Kummer surfaces described above. In all of the examples we
expect the resulting Picard-Fuchs equation to be of degree $3$ and to
further be a symmetric square (see Section~\ref{sec:k3}) of a degree
$2$ equation. This turns out indeed to be the case and we list the
resulting degree $2$ equations.

On the Shimra curve side, these degree $2$ equations have been studied
by Elkies in~\cite{Elk98}. This suggests a method of discovering and
verifying the correspondeces between the bases of the families we
study and the related Shimura curve, using these Picard-Fuchs
equations. We describe this in all the examples. We note that while
this method falls short of a rigorous proof of the existence of a
correspondence between the underlying moduli problems, it is far
easier than the analysis carried out in~\cite{Bes-Liv10}. In fact, we
needed the results of the present work to exclude some possibilities
in~\cite{Bes-Liv10} and furthermore, one of the cases there is left
unproved even though we know the correspondece using Picard-Fuchs
techniques.

\section{The Picard-Fuchs equation}
\label{sec:generalities}

We briefly recall the Picard-Fuchs differential equation for a family
of varieties over a curve. For further details see for example
\cite{Pet86}.

Let $C$ be a complex analytic curve and let $V/C$ be a local system of
$\CC$-vector spaces of dimension $n$ over $C$. The analytic rank $n$ vector
bundle $\scV := V\otimes_{\CC} \cO_C$
carries a canonical connection $\nabla$ defined by the condition that
it vanishes on sections of $V$. We fix a meromorphic vector field
$\dtot$ on $C$, e.g., the one associated with a rational parameter $t$
if $C=\PP^1$. Recall that $\dtot$ induces a \emph{covariant derivative}
operator
\begin{equation*}
  \nabla_\dtot : \scV \to \scV\;.
\end{equation*}

Let $\alpha$ be a meromorphic section of $\scV$. Since $\scV$ has rank
$n$, there is going to be a relation
\begin{equation*}
  \sum_{i=0}^m a_i (\nabla_\dtot)^i \alpha =0
\end{equation*}
with $m\le n$ and with $a_i$ meromorphic functions on $C$. We may
normalize this by insisting that $a_m=1$.

Suppose $\gamma \in V^\ast(U)$, for some open $U\in C$, where $V^\ast$
is the dual of $V$. The evaluation of $\gamma$ on $\alpha$, which we
suggestively write as $\int_\gamma \alpha$, is a meromorphic function
on $U$ and is called a \emph{period} of $\alpha$. Since $\nabla$ vanishes on
sections of $V$ it follows easily 
that the period $\int_\gamma \alpha$ satisfies the differential
equation
\begin{equation*}
  \frac{d^m}{dt^m} y + \sum_{i=0}^{m-1} a_i  \frac{d^i}{dt^i} y =0\;,
\end{equation*}
which is called the \emph{Picard-Fuchs} equation associated with
$\alpha$.

When $V$ comes from geometry, a bit more can be said. Suppose that
$\pi: X\to C$ is a smooth projective family of algebraic varieties, and that
$V$ is the family of cohomology groups
\begin{equation*}
  V = \RR^l \pi_\ast \CC
\end{equation*}
For some non-negative integer $l$. In this case, $\scV$ is canonically identifies  with the vector bundle
of \emph{de Rham} cohomology groups,
\begin{equation*}
  \scV \isom \RR^l \pi_\ast \Omega_{X/C}^\bullet \;,
\end{equation*}
and the connection $\nabla$ is identified with the \emph{Gauss-Manin}
connection on the latter vector bundle. If $C$ and $\pi$ are algebraic, it
follows easily that we may take $\alpha$ to be an algebraic
(meromorphic) section of $\scV$ and that then the coefficients $a_i$
in the Picard-Fuchs equation will be rational functions on $C$. We
will call this a Picard-Fuchs equation associated with the $H^l $ of
the family.

In geometric situations we may futher take $\alpha$ to be a section in
the sub-bundle $\pi_\ast \Omega_{X/C}^l$ and we may take $\gamma$ to
be a family of Homology classes, so that the associated period is now
indeed the integral $\int_\gamma \alpha$.

In applications, it will always be the case that the sub-bundle
$\pi_\ast \Omega_{X/C}^l$ will be of rank $1$. Thus, $\alpha$ is
determined up to a product by a rational function. The Picard-Fuchs
equation is in some sense unique then, since we may recover easily the
equation associated with such a product from the equation for $\alpha$
(see also Section~\ref{sec:schwartzian} for how to remove the
remaining ambiguity).

We can also consider Picard-Fuchs equations associated with sub-local
systems $V\subset \RR^l \pi_\ast \CC$ provided our chosen $\alpha$
resides in $V\otimes \cO_C$.

We now list the local systems considered in this work.
Let $\upH$ be the complex upper half plane. We have a family of
elliptic curves $\pi^u: E^u\to \upH$ where for $\tau\in \upH$ we have
\begin{equation*}
  E_\tau^u = \CC/\ZZ\langle 1,\tau\rangle\;.
\end{equation*}
We consider the resulting local system
\begin{equation*}
  \Sh := \RR^1 \pi_\ast \CC\;,
\end{equation*}
which has a constand fiber $\CC^2$.
See~\cite[\S~12]{Zuck79} for a detailed discussion.
Note that $\pi_\ast^u \Omega_{E^u/\upH}^1 $ has the section $dz$,
where $z$ is the standard coordinate on $\CC$,
whose associated periods are $1$ and $\tau$, hence its Picard-Fuchs
equation is $y''=0$.

Let $\Gamma\subset \sltr $ be a discrete group, acting on $\upH$ by fractional
linear transformations. It acts on $\Sh$ in the via the standard
representation of $\sltr$ on $\CC^2$.
When $\Gamma\subset \sltz$ is a congruence subgroup, the quotient
$X_\Gamma :=\Gamma \backslash \upH$ has a
family $E_\Gamma :=\Gamma \backslash E^u$ of elliptic curves above it,
and both are algebraizable. The quotient $\Gamma \backslash \Sh$ is a
local system on $X_\Gamma$, isomorphic to $ \RR^1 \pi_\ast^\Gamma \CC
$, with $\pi^\Gamma$ the induced projection.

\newcommand{\Symm}{\operatorname{Symm}}
Let $B$ be an indefinite rational quaternion algebra and let $\Gamma\ \subset B^\times $ be as in
the introduction. Let $\pi^u: A^u\to X_\Gamma$
be the associated universal
family of abelian surfaces with quaternionic multiplication. Then
(see~\cite{Bes95}) the local system $\RR^2 \pi_\ast^u \CC $ splits as
a direct sum of a $3$-dimensional local system and a system isomorphic
to the symmetric square of $\Sh$, $\Symm^2(\Sh) $.

\newcommand{\Kum}{\operatorname{Kummer}}
For any family $\pi^A: A\to X$ of abelian surfaces, let $\pi^S:
S=\Kum(A)\to X $ be the 
associated family of Kummer surfaces. Then, the local system
$\RR^2 \pi_\ast^S \CC $ splits as a sum of a $16$-dimensional trivial
system and $\RR^2 \pi_\ast^A \CC $. 
In particular, when $A=A^u$ we see that $\RR^2 \pi_\ast^S \CC $ splits
as a sum of a $19$-dimensional trivial system and $\Symm^2(\Sh)
$. Furthermore, we have $\pi_\ast^S \Omega_{S/X}^2 \subset
\Symm^2(\Sh)\otimes \O_{X_\Gamma} $. Consequently the Picard-Fuchs
equation satisfied by the periods of a relative $2$-form $\omega $ on $S$ is
going to be of degree $3$, and will be the symmetric square of a
Picard-Fuchs equation of degree $2$ associated with the local system
$\Sh$ (see Section~\ref{sec:k3} for symmetric squares of equations).

\section{Elliptic surfaces and their Picard-Fuchs equations}
\label{sec:elliptic}

 An elliptic
surface, always considered over $\PP^1$, is a
smooth and connected compact complex algebraic
surface $E$, together with a surjective morphism
$\pi: E\to \PP^1$, such that the generic fiber is
a curve of genus $1$. We will always assume that
the fibration is relatively minimal and has a
given section, denoted $0$.

For all but a finite number of points $s\in
\PP^1$, the fiber $E_s=\pi^{-1}(s)$ is an
elliptic curve. The singular locus $\Sigma =
\Sigma(E)$ of the fibration is the (finite)
subset of $\PP^1$ over which the fibers are
singular (namely $\pi$ is not everywhere smooth).
Kodaira~\cite{Kod62} classified all possible types of
singular fibers (see also \cite[Chapter
V.7]{BHPV04}).

The generic fiber of an elliptic surface may be given by a Weierstrass
equation of the form $y^2=f(x)$, where $f(x)=ax^3+bx^2+cx+d$ and
$a,b,c,d$ are rational functions of the parameter $t$ on $\PP^1$. 

 Given two
distinct points $\alpha$ and $\beta$ in $\PP^1$, the
quadratic twist $E_{\alpha,\beta}$ at these points can be
described in two ways. Algebraically, if $E$ has
Weierstrass equation $y^2=f(x)$ and $\alpha$ and $\beta$
are finite points, then $E_{\alpha,\beta}$ has equation
\begin{equation}\label{twisteq}
  \frac{t-\alpha}{t-\beta}y^2= f(x)\;.
\end{equation}
Analytically, $E_{\alpha,\beta}$ can be described as
follows. Take the double cover $B'\to \PP^1$
ramified at $\alpha$ and $\beta$ and let $E\p$ be the
pullback surface. Now quotient $E\p$ by the
transformation which identifies the two fibers
above each fiber of $E$ with sign $-1$.

\begin{definition}\label{famtwist}
Let $E\to \PP^1$ be an elliptic surface as above
with $s\in\Sigma=\Sigma(E)$. For
$\lambda\in\PP^1-\Sigma$ let $E_{s,\lambda}$
be the twisted family at $s$ and at $\lambda$.
These surfaces vary in a family $\tw_{s}(E)$
over  the \emph{$\lambda$-line}\/ $\PP^1(\lambda)-\Sigma$.
\end{definition}

The local system $\RR^1 \pi_\ast \CC $ over $\PP^1-\Sigma$ has
dimension $2$. Its dual is the \emph{homological invariant} (tensored
with $\CC$) associated
by Kodaira to the elliptic surface, and we denote it by $F$.

A Picard-Fuchs equation for the $H^1$ of a general elliptic sufrace
$E$, corresponding to the invariant differential $\omega = dx/y $, can be
computed using Algorithm~\ref{ellipticpf}. It is taken from a MAPLE
script of F. Beukers (see Section~\ref{sec:software}). We failed to find it documented anywhere so we
give a short proof that it indeed works.

\begin{algorithm}\label{ellipticpf}
  \SetKwFunction{SOLVE}{SOLVE}
  \SetKwFunction{COEFFICIENTS}{COEFFICIENTS}
  \KwIn{An elliptic surface given by a Weierstrass equation
    $y^2=ax^3+bx^2+cx+d $, with $a,b,c,d$ rational functions of $t$}
  \KwOut{The Picard Fuchs equation $y''+ c_1 y' +c_2 y = 0$ satisfied
    by the periods of the invariant differential $\omega= dx/y$}
  \BlankLine
  $f \leftarrow ax^3+bx^2+cx+d $\;
  $f_t \leftarrow \frac{\partial f}{\partial t}$\;
  $f_{tt} \leftarrow \frac{\partial f_t}{\partial t}$\;
  $f_x \leftarrow \frac{\partial f}{\partial x}$\;
  $q \leftarrow q_4 x^4 + q_3 x^3 + q_2 x^2 + q_1 x+q_0$\;
  $q_x \leftarrow \frac{\partial q}{\partial x}$\;
  $e \leftarrow \frac{-f_{tt} \cdot f}{2}+ \frac{3 f_t^2}{4} - c_1
    \frac{f_t \cdot f}{2}+c_2 f^2+ \frac{3 f_x \cdot q}{2}-f\cdot q_x$\;
  $C \leftarrow $ \COEFFICIENTS{$e,x$}\;
  $(c_1,c_2,q_0,q_1,q_2,q_3,q_4) \leftarrow$ \SOLVE{$C=0$}\;
  \caption{Computing a Picard-Fuchs equation for an elliptic surface}
\end{algorithm}
\begin{proposition}
  Algortithm~\ref{ellipticpf} gives the Picard-Fuchs equation for the
  elliptic surface $E$.
\end{proposition}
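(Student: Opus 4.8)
The plan is to reduce the assertion to the exactness of a single meromorphic $1$-form on the generic fibre, and then to a piece of linear algebra over the field $K=\CC(t)$.

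First I would work with the periods directly. Fix a locally constant (flat) family of $1$-cycles $\gamma$ on the fibres and set $P(t)=\int_\gamma \omega=\int_\gamma f^{-1/2}\,dx$, where $y=f^{1/2}$. Since $\gamma$ is horizontal for the Gauss--Manin connection, differentiation under the integral sign gives
\[
 P'=\int_\gamma -\tfrac12 f_t f^{-3/2}\,dx,\qquad
 P''=\int_\gamma\bigl(-\tfrac12 f_{tt}f^{-3/2}+\tfrac34 f_t^2 f^{-5/2}\bigr)dx,
\]
the integrands being representatives of $\nabla_\dtot\omega$ and $\nabla_\dtot^2\omega$. Hence
\[
 P''+c_1P'+c_2P=\int_\gamma \eta,\qquad \eta:=\bigl(-\tfrac12 f_{tt}f^{-3/2}+\tfrac34 f_t^2 f^{-5/2}-\tfrac{c_1}2 f_t f^{-3/2}+c_2 f^{-1/2}\bigr)dx .
\]
Because the de Rham pairing between the (two-dimensional) $\HdR$ of the generic fibre and its homology is perfect, the Picard--Fuchs relation $P''+c_1P'+c_2P=0$ holds for \emph{every} $\gamma$ if and only if $\eta$ is exact on the generic fibre. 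This converts the proposition into the problem of finding $c_1,c_2$ for which $\eta$ is exact.

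Second, I would exhibit the primitive explicitly in the shape $q f^{-3/2}$ with $q=q_4x^4+\dots+q_0$. Using $2y\,dy=f_x\,dx$ one computes
\[
 d\!\left(q f^{-3/2}\right)=\bigl(q_x f^{-3/2}-\tfrac32 q f_x f^{-5/2}\bigr)dx .
\]
Imposing $\eta=d(q f^{-3/2})$ and clearing the denominator $f^{5/2}$ reproduces \emph{exactly} the polynomial identity $e=0$ of the algorithm, since $e=\bigl(\eta-d(q f^{-3/2})\bigr)f^{5/2}$. Thus any solution $(c_1,c_2,q)$ of the coefficient equations $e=0$ makes $\eta$ exact, and by the first step the associated $c_1,c_2$ give a valid Picard--Fuchs equation.

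Third --- and this is the crux --- I would prove that the algorithm's system has a \emph{unique} solution, which simultaneously yields existence of a primitive of the prescribed form and correctness of the output. Since $f_t,f_{tt}$ have $x$-degree $\le 3$, every term of $e$ has $x$-degree $\le 6$; collecting the coefficients of $x^0,\dots,x^6$ gives seven $K$-linear equations in the seven unknowns $c_1,c_2,q_0,\dots,q_4$. Writing the system as $L(c_1,c_2,q)=-g_0$ with $g_0=-\tfrac12 f_{tt}f+\tfrac34 f_t^2$ and $L$ the linear map $(c_1,c_2,q)\mapsto -\tfrac{c_1}2 f_t f+c_2 f^2+\tfrac32 q f_x-q_x f$, both source and target are $K$-vector spaces of dimension $7$, so it suffices to show $L$ is injective. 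If $L(c_1,c_2,q)=0$, dividing by $f^{5/2}$ shows that $c_2\,\omega+c_1\,\nabla_\dtot\omega$ is exact; as $\omega$ and $\nabla_\dtot\omega$ form a $K$-basis of $\HdR$ of the generic fibre (equivalently, the ratio of the two periods of $\omega$ is the non-constant parameter $\tau$, so the period Wronskian does not vanish --- here the non-isotriviality of $E$ enters), we get $c_1=c_2=0$; then $d(q f^{-3/2})=0$, and comparing leading coefficients in $q_x f=\tfrac32 q f_x$ forces $q=0$. Hence $L$ is bijective, the algorithm returns the unique $(c_1,c_2,q)$, and the first two steps identify its $c_1,c_2$ as the coefficients of the Picard--Fuchs equation. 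The main obstacle is precisely this bijectivity: it is what guarantees that a primitive of $\eta$ of the exact shape $q f^{-3/2}$ with $\deg q\le 4$ exists, and it is the only place where one must invoke that $E$ is non-isotrivial.
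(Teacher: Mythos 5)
Your argument is correct, and it reaches the conclusion by a genuinely different route at the one nontrivial step. The paper's proof performs the same reduction of the problem to the exactness of the form $\eta$ and the same computation of $d\bigl(q(x)f^{-3/2}\bigr)$, but it justifies the \emph{existence} of a primitive of the prescribed shape by citing reduction theory (Kedlaya) together with an inspection of the poles at the $2$-torsion points, which yields $n=3$ and $\deg q\le 4$; it then takes the solvability of the resulting coefficient system for granted. You instead make the existence statement into linear algebra over $K=\CC(t)$: the map $L$ goes between two $7$-dimensional $K$-vector spaces, and injectivity follows from the linear independence of $\omega$ and $\nabla_\dtot\omega$ in $\HdR$ of the generic fibre plus the leading-coefficient comparison in $q_xf=\tfrac32 qf_x$. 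This buys you two things the paper leaves implicit: a proof that the algorithm's linear system actually has a (unique) solution, and an explicit identification of the only hypothesis needed, namely non-isotriviality of $E$ (without it the minimal relation has order $1$, $L$ fails to be injective, and the algorithm's normalization breaks down); the paper never states this hypothesis, though it holds in all its examples. The price is that your argument is tied to the specific ansatz $qf^{-3/2}$ with $\deg q\le4$, whereas the reduction-theory argument explains \emph{why} that ansatz is the right one. One small point worth making explicit in your first step: $\eta$ and $\nabla_\dtot\omega$ are forms of the second kind on the fibre, so the pairing with $H_1$ you invoke is the standard perfect pairing on $\HdR$ of the complete curve.
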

\begin{proof}
We express $y$ in terms of $x$ as $y= f(x)^{1/2}$. Applying the
covariant Gauss-Manin differentiation with respect to $t$ amounts to
differentiating (after eliminating $y$)
with respect to $t$. On the invariant differential $\omega = f(x)^{-1/2}
dx$ we find
\begin{align*}
  \nabla_\dtot \omega &= -\frac{1}{2} f^{-\frac{3}{2}} f_t dx\\
  \nabla_{\dtot}^2 \omega &= \left(\frac{3}{4} f^{-\frac{5}{2}} f_{t}^2
    -\frac{1}{2} f^{-\frac{3}{2}} f_{tt}\right) dx\;.
\end{align*}
Now we may write the general differential operator of degree $2$
applied to $\omega$,
\begin{equation}\label{pfside1}
  \nabla_{\dtot}^2 \omega + c_1 \nabla_\dtot \omega + c_2 \omega
 =  \left(\frac{3}{4} f^{-\frac{5}{2}} f_{t}^2
    -\frac{1}{2} f^{-\frac{3}{2}} f_{tt}  -\frac{1}{2}c_1
    f^{-\frac{3}{2}} f_t + c_2   f^{-\frac{1}{2}} \right)dx\;. 
\end{equation}
For the appropriately chosen $c_1$ and $c_2$ this will give a trivial
de Rham cohomology class on $E/\PP^1$.
Reduction theory (see for
example~\cite{Ked01}) tells us that it is going to be the
differential of a rational function of the form $q(x)/y^n$ and
examining the poles at the $2$-torsion points shows that one can take
$n=3$ and $q$ a
polynomial of degree at most $4$. This is given by 
\begin{equation}\label{pfside2}
  d \frac{q(x)}{y^3} =  d \frac{q(x)}{f^{3/2}} = \left(-\frac{3}{2} q \cdot
  f^{-\frac{5}{2}} \cdot f_x + q_x   f^{-\frac{3}{2}}\right) dx\;
\end{equation}
To find the Picard-Fuchs equation, we equate~\eqref{pfside1}
to~\eqref{pfside2}, multiply by $f^{5/2}$ to clear denominators. This
gives the quantity $e$ in the algorithm. Then we simply solve $e=0$,
identically in $x$, expressing the $c$'s and $q$'s in terms of $t$.
\end{proof}

\section{The Picard-Fuchs equation for a family of twists}
\label{sec:twists}

In this section we prove our main theorem, Theorem~\ref{mainthm},
which describes the differential equation satisfied by the periods of
the $H^2$ of the family of
twists $\tw_{s}(E)$, described in Definition~\ref{famtwist}, of a
fixed elliptic surface $E$. We will in fact show  
that the periods for this $H^2$ can be
recovered from the periods for $H^1$ of $E$ and the
differential equation can be recovered solely based on a
Picard-Fuchs equation for $H^1$ of $E$.

To simplify the notation, we assume that $s=0$, i.e., that the twists
are at $0$ and a varying point.
Recall from the description following~\eqref{twisteq} that $E_{0,\lambda}$ can be obtained from $E$ as follows: One takes a
double covering $\pi_\lambda: B'\to \PP^1$ which is ramified exactly 
over $0$ and $\lambda$. Let $d_\lambda:B'\to B'$ be the deck
trasformation of the covering. One considers the pullback
$\pi_\lambda^\ast E$ and takes the quotient $\pi_\lambda^\ast E/D_\lambda$ 
where $D_\lambda$ is the map $(s,e)\mapsto (d_\lambda(s),-e)$, i.e., the map
that identifies the fibers at $s$ and $d_\lambda(s)$ but via the map $-1$.
The result may have singularities at the fixed points $0$ and $\lambda$ of
$d_\lambda$ and resolving them one obtains $E_{0,\lambda}$. We
henceforth ease notation and write $E_\lambda$ for $E_{0,\lambda}$.

We now write a homology class $\Gamma_\lambda \in H_2(E_\lambda,\CC)
$. We will obtain $\Gamma_\lambda $ by modifying a fixed homology
class $\Gamma'\in H_2(E,\CC) $. In fact, we take $\Gamma'$ in
$H_1(\PP^1-\Sigma,F)$, where $F$ is the homological invariant (see
Section~\ref{sec:elliptic}).
An element of $H_1(\PP^1-\Sigma,F) $
consists of a formal sum $\sum (\gamma_i,x_i)$ where $\gamma_i$ are paths
in $\PP^1-\Sigma$ and $x_i$ is a section of $F_{\gamma_i}$, in such a
way that the obvious boundary map vanishes.  Write the one
form on the elliptic surface $E$, $dx/y$, as a family of differential
forms $\omega_t$ and consider the function $G_i$ on the path
$\gamma_i$ given at a point $t$ by $G_i(t) = \int_{x_i(t)} \omega_t$.

Having fixed $\Gamma'$ we can write a family of $2$ homology
classes $\Gamma_\lambda \in H_2(E_\lambda,\CC)$ as follows. Each path
$\gamma_i$ can
be pulled back to
$B'$. If one of the pullbacks is $\delta_i$ then the other one is
$d_\lambda(\delta_i) $. The section
$x_i$ pull back to both of these
lifts. Since we identify
the fiber at $s$ with the fiber at $d_\lambda(s)$ via the map $-1$ 
and since $-1$ acts as $-1$ on the first homology it is
clear that $\Gamma_\lambda^\prime :=\sum (\delta_i,x_i)+
\sum (d_\lambda(\delta_i),-x_i)$ descents to the required  homology
class $\Gamma_\lambda \in H_2(E_\lambda,\CC)$.

Let us now write explicitely a period associated with this homology
class. We first need to write a holomorphic
differential two form $\eta_\lambda$ on $E_\lambda$. We have a two form on $E$, $\eta = \omega_t \wedge dt$.
We can write an affine model for $B'$, the double cover of $\PP^1$
ramified at $0$ and $\lambda$, as $s^2=t(t-\lambda)$. The form $s^{-1}\cdot
\pi_\lambda^\ast $ has the right behavior with respect to deck
transformations and therefore descents to the required form
$\eta_\lambda$ on $E_\lambda$. Note that the
choice of $s^{-1}$ as a multiplier eliminates the zeros that $dt$
acquires from the ramified cover. We now compute the period
$\int_{\Gamma_\lambda}\eta_\lambda$. Let us write the function
obtained by evaluating An easy computation shows this is
equal to
\begin{align*}
  &\phantom{=} \int_{\Gamma_\lambda^\prime} s^{-1} \pi_\lambda^\ast
  \eta \\
  &= \sum_i \left( \int_{\delta_i} s^{-1} G_i(\pi_\lambda(s))
    \pi_\lambda^\ast dt + \int_{d_\lambda \delta_i} s^{-1} (-G_i(\pi_\lambda(s)))
    \pi_\lambda^\ast dt\right)\\
   &= 2 \sum_i  \int_{\delta_i} s^{-1} G_i(\pi_\lambda(s))
    \pi_\lambda^\ast dt \\
    &= 2 \sum_i \int_{\gamma_i} (t(t-\lambda))^{-1/2} G_i(t) dt\;.
\end{align*}
Dividing by $2$ we get the period 
\[
  u(\lambda):=\sum \int_{\gamma_i} (t(t-\lambda))^{-1/2} G_i(t) dt
\]

Our goal is now to compute a differential equation satisfied by
$u$. In doing so, we will only use the fact that the $G_i$ satisfy the
Picard-Fuchs equation for the elliptic family $E$, which we recalled
in section~\ref{sec:elliptic},
\begin{equation}\label{eq:ellipticpf}
  y''+c_1(t)y'+c_2(t)y = 0\; .
\end{equation}

The computation is inspired
by the computation in~\cite[2.10]{Clem03}.

\begin{lemma}
  Suppose $y$ satisfies~\eqref{eq:ellipticpf}. Then, for a fixed
  $\lambda$, the function $z=z_\lambda=
  (t(t-\lambda))^{-1/2} y$ satisfies the equation
  \begin{equation*}
    z''+\aa_\lambda(t)z'+\bb_\lambda (t)z = 0
  \end{equation*}
  with
  \begin{align*}
    \aa_\lambda(t)&= c_1(t)+\frac{2t-\lambda}{t(t-\lambda)}\\
    \bb_\lambda(t)&= c_2(t)+c_1(t)\frac{2t-\lambda}{2t(t-\lambda)}-
    \frac{\lambda^2}{4t^2(t-\lambda)^2}
  \end{align*}
\end{lemma}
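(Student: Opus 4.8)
The plan is to treat this as a change of the dependent variable: set $\mu(t)=(t(t-\lambda))^{-1/2}$, so that $z=\mu y$, and derive the equation for $z$ from that for $y$ by the standard conjugation of the second–order linear operator by a nowhere–vanishing scalar. Writing $y=\mu^{-1}z$ and substituting into~\eqref{eq:ellipticpf}, then clearing the overall factor $\mu^{-1}$, one obtains an equation $z''+\aa_\lambda z'+\bb_\lambda z=0$ whose coefficients are expressed through the logarithmic derivative $\mu'/\mu$ and the ratio $\mu''/\mu$. Explicitly, the first–order coefficient is $\aa_\lambda=c_1-2\mu'/\mu$ and the zeroth–order coefficient is $\bb_\lambda=c_2-c_1\,\mu'/\mu-\mu''/\mu+2(\mu'/\mu)^2$; these two formulas are purely mechanical and I would record them first, before specializing $\mu$.

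Next I would compute the logarithmic derivative. Putting $g=t(t-\lambda)$, so that $g'=2t-\lambda$ and $g''=2$, one has $\mu=g^{-1/2}$ and hence $\mu'/\mu=-\tfrac12\,g'/g=-\tfrac12\,(2t-\lambda)/(t(t-\lambda))$. Substituting this into the formula for $\aa_\lambda$ gives at once $\aa_\lambda=c_1+(2t-\lambda)/(t(t-\lambda))$, which is the claimed value, and the same substitution produces the middle term $c_1(2t-\lambda)/(2t(t-\lambda))$ of $\bb_\lambda$. So two of the three contributions to $\bb_\lambda$, together with all of $\aa_\lambda$, follow immediately.

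The only genuine computation, and the step I expect to be the crux, is simplifying the purely $\mu$–dependent part of $\bb_\lambda$, namely $-\mu''/\mu+2(\mu'/\mu)^2$. Setting $L=\log\mu=-\tfrac12\log g$, so that $\mu=e^L$, one has $\mu'/\mu=L'$ and $\mu''/\mu=L''+(L')^2$; hence this part equals $-L''+(L')^2$. From $L'=-\tfrac12\,g'/g$ one gets $L''=-\tfrac12\,g''/g+\tfrac12\,(g')^2/g^2$, whence, using $g''=2$, $-L''+(L')^2=g''/(2g)-\tfrac14\,(g')^2/g^2=(4g-(g')^2)/(4g^2)$. The decisive simplification rests on the identity $4g-(g')^2=4t(t-\lambda)-(2t-\lambda)^2=-\lambda^2$, which collapses the expression to $-\lambda^2/(4g^2)=-\lambda^2/(4t^2(t-\lambda)^2)$, exactly the last term of $\bb_\lambda$. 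Assembling the three contributions yields the stated formula for $\bb_\lambda$ and completes the proof. The entire argument is elementary once the conjugation formula is in hand, and no appeal to the geometric meaning of $y$ or $z$ is needed.
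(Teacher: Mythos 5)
Your proposal is correct: the conjugation formulas $\aa_\lambda=c_1-2\mu'/\mu$ and $\bb_\lambda=c_2-c_1\mu'/\mu-\mu''/\mu+2(\mu'/\mu)^2$ are right, and the identity $4t(t-\lambda)-(2t-\lambda)^2=-\lambda^2$ does collapse the last term as claimed. This is exactly the ``straightforward computation'' the paper leaves to the reader, so your argument coincides with the intended proof, just written out in full.
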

\begin{proof}
A straightforward computation.
\end{proof}

Suppose now that we are given two rational functions $p(t)=p_\lambda(t)$ and
$q(t)=q_\lambda(t)$. We have
\begin{equation*}
  (pz+qz')' = p'z+(p+q')z'+qz''
\end{equation*}
If we force the relation

\begin{equation}
  p+q'=\aa_\lambda q\label{eq:keyrel}
\end{equation}
then we can write
\begin{equation*}
   p'z+(p+q')z'+qz'' = p'z+q(z''+\aa_\lambda z')= p'z-q\bb_\lambda z\;,
\end{equation*}
by using the differential equation for $z$. The
relation~\eqref{eq:keyrel} gives $p=\aa q-q'$
so that $p'=\aa'q+q'\aa -q''$ and we finally end up with the relation
\begin{equation*}
   (pz+qz')' = (\aa'q+q'\aa -q''-q\bb )z\;.
\end{equation*}
Now, we can do the following: We have $u(\lambda) = \sum
\int_{\gamma_i} z_\lambda(t) dt$. Since
$z$ depends on $\lambda$ only through division by $\sqrt{t-\lambda} $,
we easily get by differentiating $n$ times with respect to $\lambda$ below the
integral sign,
\begin{equation}\label{lamder}
 \frac{d^n u}{d\lambda^n}= \frac{1\cdot 3\cdot\cdots \cdot
   (2n-1)}{2^n} \sum_i \int_{\gamma_i} \frac{z}{(t-\lambda)^n} dt
\end{equation}
\begin{lemma}
  There is a choice for $q$
  such that  we may expand $ \aa'q+q'\aa -q''-q\bb $ as a polynomial in
$(t-\lambda)^{-1}$ with coefficients which are rational functions of  $\lambda$,
\begin{equation}\label{eq:expand}
   \aa'q+q'\aa -q''-q\bb = \sum_n c_n(\lambda) (t-\lambda)^{-n}
\end{equation}
\end{lemma}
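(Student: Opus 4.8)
The plan is to read the displayed expression as the value $\Phi=\Phi_q:=\aa'q+q'\aa-q''-q\bb$ of a second-order linear operator applied to the unknown rational function $q=q_\lambda(t)$, and to choose $q$ so as to control the poles of $\Phi_q$ in the variable $t$. Since $\aa=\aa_\lambda$ and $\bb=\bb_\lambda$ are rational in both $t$ and $\lambda$, so is $\Phi_q$ for every rational $q$; hence the $\lambda$-rationality of the coefficients $c_n(\lambda)$ in \eqref{eq:expand} will be automatic once $q$ is obtained as a solution of a linear system whose entries are rational in $\lambda$. The real content is therefore about the $t$-dependence alone: I must produce a rational $q$ for which $\Phi_q$, viewed as a function of $t$, has poles only at $t=\lambda$ and is holomorphic at $t=\infty$. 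The partial-fraction expansion at $t=\lambda$ of such a $\Phi_q$ is then exactly the right-hand side of \eqref{eq:expand}.

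Before choosing $q$ I would record a structural identity that isolates the elliptic data. With $w=(t(t-\lambda))^{1/2}$ (so that $z=w^{-1}y$), let $L=\frac{d^2}{dt^2}+c_1\frac{d}{dt}+c_2$ be the operator of \eqref{eq:ellipticpf} and $L^{\ast}[\psi]=\psi''-(c_1\psi)'+c_2\psi$ its formal adjoint. Using only $\aa=c_1+2w'/w$ and the formula for $\bb$, a direct computation gives
\begin{equation*}
  \Phi_q=-\,w\,L^{\ast}\!\left[w^{-1}q\right];
\end{equation*}
equivalently, one rewrites the primitive $pz+qz'$ in terms of $y$ and $y'$ and eliminates $y''$ by \eqref{eq:ellipticpf}. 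This makes the pole bookkeeping transparent: since $\lambda\notin\Sigma$, the only poles of $\Phi_q$ away from $t=\lambda$ and $t=\infty$ sit at the finite bad fibres $\Sigma_0=\Sigma(E)\setminus\{\infty\}$ (including the twist point $t=0$, where $w$ also ramifies), of order at most $m+2$ wherever $q$ has order at most $m$, because $c_1$ and $c_2$ have poles of order at most $1$ and $2$ on $\Sigma$.

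I would then prove existence by a dimension count. Restrict $q$ to have poles of order at most $m$ at each point of $\Sigma_0$, a pole of order at most $M$ at $t=\lambda$, and polynomial part of degree at most $d$ at infinity; such $q$ form a $\CC(\lambda)$-vector space whose dimension grows linearly in $M$. The requirement that $\Phi_q$ be regular at every point of $\Sigma_0$ and finite at $\infty$ is a finite collection of homogeneous $\CC(\lambda)$-linear conditions on the coefficients of $q$, and --- this is the decisive point --- their number is independent of $M$, because the order-$M$ pole of $q$ at $t=\lambda$ affects neither the principal parts of $\Phi_q$ at the points of $\Sigma_0$ nor its growth at infinity. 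Consequently, for $M$ large the number of unknowns exceeds the number of conditions, a nonzero solution $q$ exists, and for it $\Phi_q$ has poles only at $t=\lambda$ and is finite at infinity, which is \eqref{eq:expand}.

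The step I expect to be the main obstacle is the pole-order bookkeeping that feeds this count: one must bound the order of $\Phi_q$ at each point of $\Sigma_0\cup\{\infty\}$ in terms of $m$, $d$ and the local orders of $c_1,c_2$, with special care at $t=0$, where the twist pole coincides with a singular fibre of $E$, and at $\infty$ if $\infty\in\Sigma(E)$. The adjoint identity is precisely what keeps this in hand, since the singular points of $L^{\ast}$ are exactly those of \eqref{eq:ellipticpf}. Finally, while the lemma asserts only existence of the expansion, the downstream derivation of a nontrivial equation in $\lambda$ wants $\Phi_q\not\equiv 0$; this is available because the solutions with $\Phi_q=0$ correspond to rational solutions of $L^{\ast}[w^{-1}q]=0$ and so span a space of dimension at most two, whereas the count produces a solution space that can be made arbitrarily large.
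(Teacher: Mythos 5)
Your proof is correct, but it takes a genuinely different route from the paper's. The paper's argument is a short explicit construction: since $\aa'q+q'\aa=(\aa q)'$, taking $q_0$ to be the least common multiple of the denominators of $\aa$ and $\bb$ makes $(\aa q_0)'-q_0''-\bb q_0$ a polynomial in $t$, say of degree $m$, and one then sets $q=q_0(t-\lambda)^{-m}$; this specific $q$ is exactly what Algorithm~\ref{mainalg} implements, so the constructive character of the proof is the point. You instead prove bare existence by a Riemann--Roch style count over $\CC(\lambda)$: the space of admissible $q$ grows linearly in the allowed pole order $M$ at $t=\lambda$, while the number of linear conditions (vanishing of the principal parts of $\Phi_q$ on $\Sigma_0$, boundedness at $\infty$) stays bounded. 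That count is sound, though one phrase should be sharpened: the pole of $q$ at $t=\lambda$ certainly does affect the \emph{values} of the principal parts of $\Phi_q$ at the points of $\Sigma_0$ (it gets multiplied into $\aa'$ and $\bb$, which are singular there); what is true, and what your argument actually needs, is that it does not increase their \emph{lengths}, i.e.\ the pole orders of $\Phi_q$ there, so the number of conditions is independent of $M$. Your adjoint identity $\Phi_q=-wL^{\ast}[w^{-1}q]$ is correct ($\Phi_q=(\aa q)'-q''-\bb q$ is minus the formal adjoint of $\frac{d^2}{dt^2}+\aa\frac{d}{dt}+\bb$ applied to $q$, and that operator is $w^{-1}\circ L\circ w$) and is a nice structural observation absent from the paper. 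What your route buys: it is insensitive to the degree bookkeeping at $t=\infty$ that the paper's choice leaves implicit (with $q=q_0(t-\lambda)^{-m}$ the cross terms $((t-\lambda)^{-m})'(\aa q_0-2q_0')$ and $((t-\lambda)^{-m})''q_0$ contribute no positive powers of $t-\lambda$ only if $\deg(\aa q_0-2q_0')\le m+1$ and $\deg q_0\le m+2$), and it also delivers $\Phi_q\not\equiv 0$, which the lemma does not assert but which Theorem~\ref{mainthm} needs for the output equation to be nontrivial. What it costs: it yields only an effective linear-algebra procedure rather than the closed-form $q$ on which the algorithm is built.
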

\begin{proof}
First let $q_0$ be the least common multiple of the denominators of
$\aa$ and $\bb$ as rational functions of $t$. Then, $ \aa'q_0+q_0^\prime \aa
-q_0^{\prime\prime}-q_0 \bb $ is a polynomial in $t$ and can therefore
also be written as a polynomial in $t-\lambda$, with coefficients
which are rational functions of $\lambda$. Suppose that this
polynomial has degree $m$. Then, we may simply take
$q=q_0 (t-\lambda)^{-m}$.
\end{proof}
We may modify the paths $\gamma_i$ to homotopic paths making sure that
$\sqrt{t-\lambda}$ is single valued on each path. Also, the sums of
the monodromies of the $G_i$ around the paths $\gamma_i$ is $0$
because $\Gamma'$ is closed. Thus, we have,
\begin{align*}
  0 &= \sum_i \int_{\gamma_i}  \frac{d}{dt}\left(p_\lambda z_\lambda +q_\lambda
  \frac{d}{dt}z_\lambda \right) dt \\
  &= \sum_i \int_{\gamma_i}  \left(\frac{d\aa_\lambda}{dt} q_\lambda +
    \frac{d q_\lambda }{dt}\aa_\lambda  - \frac{d^2 q_\lambda}{dt^2}
    -q_\lambda \bb_\lambda \right)z_\lambda dt \\
  &= \sum_i \int_{\gamma_i}  \sum_n c_n(\lambda) (t-\lambda)^{-n} z_\lambda dt \\
  &=  \sum_n  c_n(\lambda) \sum_i \int_{\gamma_i}   \frac{ z_\lambda}{(t-\lambda)^n} dt \\
  &= \sum_n \tilde{c}_n(\lambda)  \frac{d^n u}{d\lambda^n}\;,
\end{align*}
by~\eqref{lamder}, with
$$ \tilde{c}_n(\lambda) =\frac{2^n}{1\cdot 3\cdot\cdots \cdot
  (2n-1)} c_n(\lambda)\;.$$
We therefore proved the following.
\begin{theorem}\label{mainthm}
  Let $E$ be an elliptic surface whose periods satisfy the
  differential equation~\eqref{eq:ellipticpf}. Then, algorithm~\ref{mainalg}
  computes a
  differential equation with polynomial coefficients satisfied by a
  non-trivial period for $H^2$ of the family $\tw_{0}(E)$.
\end{theorem}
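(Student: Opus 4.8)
The plan is to assemble the explicit computation carried out in the paragraphs preceding the statement into a proof, and then to observe that Algorithm~\ref{mainalg} is simply a mechanization of that computation. The object whose equation we seek is the nontrivial $H^2$-period
\[
  u(\lambda)=\sum_i \int_{\gamma_i} z_\lambda\,dt, \qquad z_\lambda=(t(t-\lambda))^{-1/2}G_i(t) \text{ on } \gamma_i,
\]
where each $G_i$ is a period of $H^1$ of $E$ and hence solves \eqref{eq:ellipticpf}. The first step is to record, via the first Lemma above, that $z_\lambda$ itself satisfies the second order equation $z''+\aa_\lambda z'+\bb_\lambda z=0$ in $t$, with $\aa_\lambda,\bb_\lambda$ rational in $t$ and $\lambda$. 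The value of passing from $G_i$ to $z_\lambda$ is that it absorbs both the elliptic Picard--Fuchs operator and the twisting factor $(t(t-\lambda))^{-1/2}$ into a single ODE whose $\lambda$-dependence is completely explicit.

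Second, I would use the elementary identity $(pz+qz')'=(\aa'q+q'\aa-q''-q\bb)z$, valid for any rational $p,q$ subject to the constraint \eqref{eq:keyrel}; this is precisely where the ODE for $z_\lambda$ is invoked to eliminate $z''$. By the second Lemma above I then choose $q$ (concretely $q=q_0(t-\lambda)^{-m}$, with $q_0$ the least common multiple of the denominators of $\aa_\lambda,\bb_\lambda$ and $m$ the degree after clearing denominators) so that the right-hand side becomes a finite polynomial in $(t-\lambda)^{-1}$, as in \eqref{eq:expand}. Each term $(t-\lambda)^{-n}z_\lambda$ integrates, by differentiation under the integral sign \eqref{lamder}, to a constant multiple of $d^n u/d\lambda^n$. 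Hence, granting that the integral of the total $t$-derivative vanishes, the chain of equalities displayed before the statement collapses to $\sum_n \tilde c_n(\lambda)\,d^n u/d\lambda^n=0$ with $\tilde c_n(\lambda)=\frac{2^n}{1\cdot 3\cdots(2n-1)}c_n(\lambda)$ rational in $\lambda$, which is the asserted equation.

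The step I expect to be the crux is justifying that $\sum_i\int_{\gamma_i}\frac{d}{dt}(p_\lambda z_\lambda+q_\lambda z_\lambda')\,dt=0$, i.e. that the boundary contributions of the exact form cancel. Here I would lean on exactly the features built into the construction of $\Gamma'$: after homotoping the $\gamma_i$ so that $\sqrt{t-\lambda}$ is single valued along each path, the only obstruction to cancellation of the endpoint terms is the collective monodromy of the $G_i$ around the $\gamma_i$, and this vanishes because $\Gamma'$ is a cycle in $H_1(\PP^1-\Sigma,F)$, i.e. its boundary is zero. This closedness is what makes $\Gamma_\lambda$ a genuine homology class and simultaneously kills the boundary term; it is the geometric input that turns the formal reduction-theory identity into an actual differential equation, and it is the point requiring genuine care rather than calculation.

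Finally, two bookkeeping matters complete the argument. Nontriviality of $u$ follows by choosing $\Gamma'$ so that the associated period of $\eta_\lambda$ does not vanish identically, which is possible since the relevant summand of $\RR^2\pi_\ast^S\CC$ is the nonconstant local system $\Symm^2(\Sh)$ rather than a trivial factor. And the assertion that Algorithm~\ref{mainalg} carries this out reduces to checking that its successive steps --- forming $\aa_\lambda,\bb_\lambda$ from the input $c_1,c_2$; selecting $q$ by the recipe of the second Lemma; expanding $\aa'q+q'\aa-q''-q\bb$ in powers of $(t-\lambda)^{-1}$; and rescaling the coefficients by $2^n/(1\cdot 3\cdots(2n-1))$ --- reproduce exactly the $\tilde c_n(\lambda)$ above. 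Since every algorithmic step is algebraic and the sole analytic ingredient has been discharged in the previous paragraph, the output is a valid Picard--Fuchs equation for $\tw_0(E)$, as claimed in Theorem~\ref{mainthm}.
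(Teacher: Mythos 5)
Your proposal follows the paper's own argument essentially verbatim: the two lemmas, the identity $(pz+qz')'=(\aa'q+q'\aa-q''-q\bb)z$ under the constraint~\eqref{eq:keyrel}, the choice $q=q_0(t-\lambda)^{-m}$, differentiation under the integral sign via~\eqref{lamder}, and the vanishing of the exact-form integral via single-valuedness of $\sqrt{t-\lambda}$ along homotoped paths together with the closedness of $\Gamma'$. This is correct and is the same proof; your added remark on non-triviality of the period is a reasonable small supplement the paper leaves implicit.
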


\begin{algorithm}\label{mainalg}
  \SetKwFunction{LCM}{LCM}
  \SetKwFunction{DENOMINATOR}{DENOMINATOR}
  \SetKwFunction{DEG}{DEG}
  \SetKwFunction{COEFFICIENT}{COEFFICIENT}
  \KwIn{A Picard-Fuchs equation $y''+c_1(t)y'+c_2(t)y = 0$ for an
    elliptic surface $E$}
  \KwOut{A vector $\tilde{c}$ such that a Picard-Fuchs equation for
    the family of twists $\tw_{0}(E)$ is given by $\sum_n
    \tilde{c}_n(\lambda)  \frac{d^n u}{d\lambda^n} $}
  \BlankLine
  $\aa\leftarrow  c_1(t)+\frac{2t-\lambda}{t(t-\lambda)}$\;
  $\bb\leftarrow  c_2(t)+c_1(t)\frac{2t-\lambda}{2t(t-\lambda)}-
    \frac{\lambda^2}{4t^2(t-\lambda)^2} $\;
  $q_0 \leftarrow$ \LCM{\DENOMINATOR{$\aa$},\DENOMINATOR{$\bb$}}\;
  $pol_0 \leftarrow  \aa'q_0+q_0^\prime \aa
  -q_0^{\prime\prime}-q_0 \bb$\;
  $m \leftarrow$ \DEG{$pol_0$}\;
  $q\leftarrow q_0 (t-\lambda)^{-m}$\;
   $pol \leftarrow  (\aa'q+q'\aa -q''-q\bb)_{t\leftarrow s+\lambda}$\; 
  $c_n(\lambda) \leftarrow $ \COEFFICIENT{$s^{-n}$ in $pol$}\;
  $\tilde{c}_n(\lambda) \leftarrow \frac{2^n}{1\cdot 3\cdot\cdots \cdot
  (2n-1)} c_n(\lambda)$\;
  \caption{Computing a differential equation for periods of
    $\tw_{0}(E)$ }
\end{algorithm}

\section{K3 surfaces}
\label{sec:k3}

In~\cite{Bes-Liv10} we studied a particluar class of elliptic fibrations. Out
of the list of elliptic fibrations with $4$ singular fibers compiled
by Herfurtner~\cite{Her91}, we picked out the ones for which the twists
give K3 surfaces, generically with Picard number $19$. These K3
surfaces are then isogenous to Kummer surfaces associted with Abelian
surfaces whose isogeny algebra is a rational quaternion algebra. We further
picked out only the examples in which the quaternion algebra in
question is indefinite There are
11 examples, which we list below 
(Table~1, see also~\cite[Table~1]{Bes-Liv10}). The
method for deciding which families
of twists correspond to quaternion algebras and the method for
determining the discriminant of the associated algebra are detailed
in~\cite[Proposition~2.4.1 and Lemma~2.5.1]{Bes-Liv10}.

As discussed in Section~\ref{sec:generalities}, for each of the
examples above, the resulting Picard-Fuchs equation should be of
degree $3$ and should be a symmetric square of a degree $2$ equation,
which is a Picard-Fuchs equation for the Shimura local system
descended to the base. In this section we verify that this is indeed the
case, and we compute the degree $2$ equations.

Symmetric squares of differential equations are considered, for
example in~\cite[Example~6.5.2]{Pet86}. Given a differential equation of degree
$2$, $y''+ay'+by=0$, one looks for the equations satisfied by $z=y^2$. The result
is
\begin{equation}
  \label{eq:symmsquare}
  z'''+\alpha z''+ \beta z' + \gamma z = 0 \text{ with } \alpha=3a,\;
  \beta= 4b+2 a^2 + a',\; \gamma= 4ab+2b'\;.
\end{equation}
If we are given a differential equation of degree 3,  we can
check if it is a symmetric square of one of degree 2 and find the
``square root'' as described in Algorithm~\ref{squareroot}:

\begin{algorithm}\label{squareroot}
  \KwIn{A differential equation $ z'''+\alpha z''+ \beta z' + \gamma z = 0$}
  \KwOut{A differential equation $y''+ay'+by=0$ whose symmetric square
    equals the input equation, if it exists}
  \BlankLine
  $a \leftarrow \alpha/3$\;
  $b \leftarrow (\beta-2 a^2-a')/4$\;
  $c \leftarrow \gamma-4ab-2b'$\;
  \caption{Taking the square root of a degree 3 differential equation}
\end{algorithm}

Not surprisingly, in all 11 examples, the resulting differential
equation turn out to be a symmetric square of an equation of degree
2. In table~\ref{bigtablenew} below we list the examples together with the
resulting equations of degree 2 (one can recover the degree 3 equation
using~\eqref{eq:symmsquare}).

The first column is the example number, which is the same
as in Table~\ref{bigtable} in~\cite{Bes-Liv10}. The second column gives the
types of singular fibers for the based elliptic surface and their
locations and the third column gives the coefficients of the degree
$2$ equation. The final column gives the expected discriminant for the
associated quaternion algebra. In the table $\gamma = \frac{-{\left( 1 +
{\sqrt{-2}} \right) }^4}{3}$ and $\delta =
\frac{{\left( 1 + {\sqrt{-7}} \right)
}^7}{512}$. Conjugates for such elements are
over $\Q$.

\begin{table}[htbp]
\[
\begin{array}{||l||l|l|l||}

  1& \abn{ I_1 ,I_1 ,I_8 ,II }
  {\gamma,\bar{\gamma},\infty,0}
    { \frac{3}{4},\frac{3}{4},\frac{3}{4},\frac{35}{36}}&
    \bbn{ \frac{27 - 21\lambda  +
       6{\lambda }^2}{27\lambda  - 14{\lambda }^2 +
       3{\lambda }^3}}
    {\frac{3\left( -1 - 6\lambda  + 3{\lambda }^2
         \right) }{16{\lambda }^2
       \left( 27 - 14\lambda  + 3{\lambda }^2 \right) }}
    {\frac{3\left( 945 - 652\lambda  +
         142{\lambda }^2 - 60{\lambda }^3 +
         9{\lambda }^4 \right) }{4{\lambda }^2
       {\left( 27 - 14\lambda  + 3{\lambda }^2 \right)
           }^2}}&6\\\hline
  2& \abn{ I_1 ,I_2 ,I_7 ,II }
  {\frac{-9}{4},\frac{-8}{9},\infty,0}
   { \frac{3}{4},\frac{3}{4},\frac{3}{4},\frac{35}{36}}&
   \bbn{ \frac{144 + 339\lambda  + 144{\lambda }^2}
     {144\lambda  + 226{\lambda }^2 + 72{\lambda }^3}}
    {\frac{-2 + 36\lambda  + 27{\lambda }^2}
     {4{\lambda }^2\left( 72 + 113\lambda  +
         36{\lambda }^2 \right) }}
    {\frac{20160 + 42008\lambda  + 41331{\lambda }^2 +
       17388{\lambda }^3 + 3888{\lambda }^4}{4
       {\lambda }^2{\left( 72 + 113\lambda  +
           36{\lambda }^2 \right) }^2}}&6\\\hline
  3& \abn{ I_1 ,I_4 ,I_5 ,II }{ -10,0,\infty,\frac{1}{8}}
   { \frac{3}{4},\frac{3}{4},\frac{3}{4},\frac{35}{36}}&
   \bbn{ \frac{-5 + 119\lambda  + 16{\lambda }^2}
     {\lambda \left( -10 + 79\lambda  +
         8{\lambda }^2 \right) }}
    {\frac{6\left( -1 + 7\lambda  + 2{\lambda }^2
         \right) }{{\left( 1 - 8\lambda  \right) }^2
       \lambda \left( 10 + \lambda  \right) }}
    {\frac{3\left( 25 - 210\lambda  +
         2179{\lambda }^2 + 216{\lambda }^3 +
         16{\lambda }^4 \right) }{{\left( 1 -
           8\lambda  \right) }^2{\lambda }^2
       {\left( 10 + \lambda  \right) }^2}}&15\\\hline
  4& \abn{ I_2 ,I_3 ,I_5 ,II }{\frac{-5}{9},0,\infty,3}
   { \frac{3}{4},\frac{3}{4},\frac{3}{4},\frac{35}{36}}&
   \bbn{ \frac{15 + 39\lambda  - 36{\lambda }^2}
     {30\lambda  + 44{\lambda }^2 - 18{\lambda }^3}}
    {\frac{-23 - 246\lambda  + 81{\lambda }^2}
     {48{\left( -3 + \lambda  \right) }^2\lambda
       \left( 5 + 9\lambda  \right) }}
    {\frac{2025 + 4295\lambda  + 9156{\lambda }^2 -
       1809{\lambda }^3 + 729{\lambda }^4}{12
       {\left( -3 + \lambda  \right) }^2{\lambda }^2
       {\left( 5 + 9\lambda  \right) }^2}}&10\\\hline
  5& \abn{ I_1 ,I_1 ,I_7 ,III }
  {\delta,\bar{\delta},\infty,0}
  { \frac{3}{4},\frac{3}{4},\frac{3}{4},\frac{15}{16}}&
   \bbn{ \frac{64 + 39\lambda  + 16{\lambda }^2}
     {64\lambda  + 26{\lambda }^2 + 8{\lambda }^3}}
    {\frac{-2 + 4\lambda  + 3{\lambda }^2}
     {4{\lambda }^2\left( 32 + 13\lambda  +
         4{\lambda }^2 \right) }}
    {\frac{3840 + 2072\lambda  + 43{\lambda }^2 +
       220{\lambda }^3 + 48{\lambda }^4}{4
       {\lambda }^2{\left( 32 + 13\lambda  +
           4{\lambda }^2 \right) }^2}}&14\\\hline
  6& \abn{ I_1 ,I_2 ,I_6 ,III }{ 4,1,\infty,0}
  { \frac{3}{4},\frac{3}{4},\frac{3}{4},\frac{15}{16}}&
   \bbn{ \frac{8 - 15\lambda  + 4{\lambda }^2}
     {2\lambda \left( 4 - 5\lambda  +
         {\lambda }^2 \right) }}
    {\frac{-1 - 6\lambda  + 3{\lambda }^2}
     {16{\lambda }^2
       \left( 4 - 5\lambda  + {\lambda }^2 \right) }}
    {\frac{3\left( 20 - 33\lambda  + 28{\lambda }^2 -
         7{\lambda }^3 + {\lambda }^4 \right) }{4
       {\lambda }^2{\left( 4 - 5\lambda  +
           {\lambda }^2 \right) }^2}}&6\\\hline
  7& \abn{ I_1 ,I_3 ,I_5 ,III }
  {\frac{-25}{3},0,\infty,\frac{1}{5}}
   { \frac{3}{4},\frac{3}{4},\frac{3}{4},\frac{15}{16}}&
   \bbn{ \frac{25 - 369\lambda  - 60{\lambda }^2}
     {50\lambda  - 244{\lambda }^2 - 30{\lambda }^3}}
    {\frac{-167 + 630\lambda  + 225{\lambda }^2}
     {16{\left( 1 - 5\lambda  \right) }^2\lambda
       \left( 25 + 3\lambda  \right) }}
    {\frac{15\left( 125 - 675\lambda  +
         4244{\lambda }^2 + 501{\lambda }^3 +
         45{\lambda }^4 \right) }{4
       {\left( 1 - 5\lambda  \right) }^2{\lambda }^2
       {\left( 25 + 3\lambda  \right) }^2}}&6\\\hline
  8& \abn{ I_2 ,I_3 ,I_4 ,III }{ \frac{-1}{3} ,0,\infty,1}
   { \frac{3}{4},\frac{3}{4},\frac{3}{4},\frac{15}{16}}&
   \bbn{ \frac{1 + 3\lambda  - 12{\lambda }^2}
     {2\lambda  + 4{\lambda }^2 - 6{\lambda }^3}}
    {\frac{-1 - 9\lambda  + 9{\lambda }^2}
     {16{\left( -1 + \lambda  \right) }^2
       \left( \lambda  + 3{\lambda }^2 \right) }}
    {\frac{3\left( 1 + 3\lambda  + 13{\lambda }^2 -
         6{\lambda }^3 + 9{\lambda }^4 \right) }{4
       {\left( -1 + \lambda  \right) }^2
       {\left( \lambda  + 3{\lambda }^2 \right) }^2}}
       &6\\\hline
   9& \abn{ I_1 ,I_1 ,I_6 ,IV }{ 1,-1,\infty,0}
   { \frac{3}{4},\frac{3}{4},\frac{3}{4},\frac{8}{9}}&
   \bbn{ \frac{1 - 2{\lambda }^2}{\lambda  - {\lambda }^3}}
    {\frac{4 + 27{\lambda }^2}
     {144{\lambda }^2\left( -1 + {\lambda }^2 \right) }}
    {\frac{32 + 49{\lambda }^2 + 27{\lambda }^4}
     {36{\lambda }^2
       {\left( -1 + {\lambda }^2 \right) }^2}}&6\\\hline
  10& \abn{ I_1 ,I_2 ,I_5 ,IV }
  { \frac{-27}{4} ,\frac{-1}{2} ,\infty,0}
   { \frac{3}{4},\frac{3}{4},\frac{3}{4},\frac{8}{9}}&
   \bbn{ \frac{27 + 87\lambda  + 16{\lambda }^2}
     {27\lambda  + 58{\lambda }^2 + 8{\lambda }^3}}
    {\frac{-3 + 16\lambda  + 6{\lambda }^2}
     {4{\lambda }^2\left( 27 + 58\lambda  +
         8{\lambda }^2 \right) }}
    {\frac{648 + 1824\lambda  + 3157{\lambda }^2 +
       476{\lambda }^3 + 48{\lambda }^4}{{\lambda }^2
       {\left( 27 + 58\lambda  + 8{\lambda }^2 \right)
           }^2}}&10\\\hline
  11& \abn{ I_3 ,I_3 ,I_2 ,IV }{ \infty,0,-1,1}
{ \frac{3}{4},\frac{3}{4},\frac{3}{4},\frac{8}{9}}&
   \bbn{ \frac{-1 + \lambda  + 4{\lambda }^2}
     {2\left( -\lambda  + {\lambda }^3 \right) }}
    {\frac{-13 - 22\lambda  + 27{\lambda }^2}
     {144{\left( -1 + \lambda  \right) }^2
       \left( \lambda  + {\lambda }^2 \right) }}
    {\frac{27 + 5\lambda  + 64{\lambda }^2 +
       5{\lambda }^3 + 27{\lambda }^4}{36
       {\lambda }^2{\left( -1 + {\lambda }^2 \right) }^2}}&6

\end{array}
\]
\caption{Twists of elliptic surfaces}\label{bigtablenew}
\end{table}

\section{Schwarzian derivatives}
\label{sec:schwartzian}

Our goal in the rest of this work is to compare the differential
equations that we obtained in the previous section to those obtained
by Elkies in~\cite{Elk98}. One essential problem is that the equation
depends in an essential way on the choice of the section of the de
Rham bundle. Even, as is the
case for us, if the choice is between different sections of a line
bundle, it still means that the periods could be multiplied by an
arbitrary rational function. To compare two differential equations it
is best to compare quantities which are invariant with respect to such
scaling.

This can be done as follows for equations of degree 2. Consider the
quotient of two independent solutions. This is invariant with respet
to scaling. It depends of course on the choice of the two solutions,
but only up to a fractional linear transformation. Applying the
Schwarzian derivative, to be recalled next, removes this
ambiguity. Our reference for this material is~\cite{Elk98} (see
also~\cite{Iha74}).

\begin{definition}
  The Schwarzian derivative of a function $z$ with respect to the
  parameter $\zeta$ is the function~\cite[(13)]{Elk98}
  \begin{equation*}
    S_\zeta(z) = \frac{2z' z''' - 3 (z'')^2}{(z')^2}
  \end{equation*}
  where derivatives are with respect to $\zeta$.
\end{definition}
We recall the following relevant results

\begin{proposition}
  \begin{enumerate}
  \item If $z_1$ is obtained from $z$ by a fractional linear
    transformation, then $ S_\zeta(z_1)= S_\zeta(z) $.
  \item If $z$ is the quotient of a basis of solutions to the
    differential equation $y''+ay'+by=0$, derivative taken with
    respect to $\zeta$, then the Schwarzian derivative of $z$, which
    is independent of the choice of solutions by the first part, is
    given by~\cite[(17)]{Elk98}
    \begin{equation*}
      S_\zeta(z) = 4b - a^2-2 a'\;.
    \end{equation*}
  \end{enumerate}
\end{proposition}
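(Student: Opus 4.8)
The plan is to handle the two parts by quite different routes: part~(1) is a purely formal identity valid for any thrice-differentiable $z$, so it has nothing to do with the differential equation, whereas part~(2) is exactly where the link to $y''+ay'+by=0$ enters. For part~(1) I would exploit that every fractional linear transformation is a composition of the three generators $z\mapsto z+b$, $z\mapsto az$ (with $a\neq0$), and $z\mapsto 1/z$, together with the remark that invariance under each generator already forces invariance under the whole group: if $S_\zeta(M_1(z))=S_\zeta(z)$ and $S_\zeta(M_2(z))=S_\zeta(z)$ hold identically, then $S_\zeta\big((M_1\circ M_2)(z)\big)=S_\zeta(M_2(z))=S_\zeta(z)$. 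For part~(2) the idea is to clear the first-order term by an integrating factor, reducing to an equation of the form $v''+Iv=0$, and then compute directly.

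For part~(1), note that the defining expression $S_\zeta(z)=\big(2z'z'''-3(z'')^2\big)/(z')^2$ involves only $z',z'',z'''$, which are unchanged under the translation $z\mapsto z+b$, so translations are immediate. Under the scaling $z\mapsto az$ each of $z',z'',z'''$ picks up a factor $a$, and the expression is homogeneous of degree $2-2=0$, hence again invariant. The single generator requiring real work is the inversion $z\mapsto 1/z$: setting $w=1/z$ and substituting $w'=-z'/z^2$, $w''=-z''/z^2+2(z')^2/z^3$, and $w'''=-z'''/z^2+6z'z''/z^3-6(z')^3/z^4$ into the formula, a short but genuine computation collapses everything to $S_\zeta(w)=S_\zeta(z)$. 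Composing the three generators then finishes part~(1).

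For part~(2), I would first remove the first-order term. Writing $y=\mu v$ with $\mu=\exp\!\big(-\half\int a\big)$, so that $\mu'/\mu=-\half a$, a direct substitution turns $y''+ay'+by=0$ into $v''+Iv=0$ with the invariant $I=b-\tfrac14 a^2-\half a'$. The crucial point is that the ratio is \emph{unchanged}, $z=y_1/y_2=v_1/v_2$, since the common factor $\mu$ cancels; so it suffices to treat $z=v_1/v_2$ where $v_1,v_2$ solve $v''+Iv=0$. Here the Wronskian $W=v_1'v_2-v_1v_2'$ is constant, because $W'=v_1''v_2-v_1v_2''=-Iv_1v_2+Iv_1v_2=0$; hence $z'=W/v_2^2$ and therefore $z''/z'=-2v_2'/v_2$. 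Rewriting the Schwarzian in the algebraically equivalent form $S_\zeta(z)=2(z''/z')'-(z''/z')^2$ and substituting, the $(v_2'/v_2)^2$ terms cancel and $v_2''=-Iv_2$ leaves $S_\zeta(z)=-4v_2''/v_2=4I=4b-a^2-2a'$, which is the claimed formula. The assertion that this is independent of the chosen basis of solutions is then precisely part~(1), since replacing $(y_1,y_2)$ by another basis replaces $z$ by a fractional linear transform of itself.

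I do not expect a genuine conceptual obstacle in either part, so the things to watch are organizational rather than deep. The mildly tedious steps are the inversion identity in part~(1) and the final simplification in part~(2), both of which hinge on spotting the right cancellations; the rewriting $S_\zeta(z)=2(z''/z')'-(z''/z')^2$ is what keeps the latter short. The one point worth flagging explicitly is the \emph{normalization}: the Schwarzian used here is twice the classical $z'''/z'-\tfrac32(z''/z')^2$, and it is exactly this factor of two that converts the classical value $2I$ of the ratio's Schwarzian into the clean coefficient $4b-a^2-2a'$ matching~\cite[(17)]{Elk98}. I would therefore state the reduction $z=v_1/v_2$ and the constancy of the Wronskian carefully, since those two facts are what make the whole computation collapse.
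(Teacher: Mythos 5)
Your argument is correct and complete: the generator decomposition for part (1) (translations and scalings being trivial, inversion being the one genuine computation, and conjugation of invariances handling composition) is sound, and in part (2) the reduction to normal form $v''+Iv=0$ with $I=b-\tfrac14a^2-\tfrac12a'$, the constancy of the Wronskian, and the rewriting $S_\zeta(z)=2(z''/z')'-(z''/z')^2$ all check out, including the factor-of-two normalization that yields $4b-a^2-2a'$ rather than the classical $2I$. The one thing to note is that the paper itself offers \emph{no} proof of this proposition: it is stated under the heading ``We recall the following relevant results'' and justified only by citation to Elkies's equations (13) and (17) (with Ihara as a further reference), so there is no argument in the paper to compare yours against. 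Your write-up is therefore a self-contained verification of a fact the authors treat as standard; it matches the classical derivation one would find in the cited sources, and nothing in it conflicts with the conventions of the paper (in particular the definition $S_\zeta(z)=\bigl(2z'z'''-3(z'')^2\bigr)/(z')^2$ used here).
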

This gives our required invariant.

To describe the dependency of the parameter $\zeta$, it is better to
consider the quadratic differential
\begin{equation*}
  \sigma_\zeta(z) = S_\zeta(z) (d\zeta)^2
\end{equation*}
We have the formula~\cite[(14)]{Elk98}
\begin{equation*}
  S_\eta(z) = \left(\frac{d\zeta}{d\eta}\right)^2 S_\zeta(z) + S_\eta(\zeta)
\end{equation*}
and thus
\begin{equation*}
  \sigma_\eta(z) = \sigma_\zeta(z) + \sigma_\eta(\zeta)
\end{equation*}
so the quadratic differential $\sigma$ is not independent of the
parameter, but has a simple transformation formula with respect to
changing the variable. This will allow us, using the sigma invariant,
to determine the change of variables that will take one equation to
another.

To allow us to guess the required change of variables more easily, we
further study the residue of the sigma invariant and how it behaves
with respect to change of variables.
 
An honest quadratic differential $f(\zeta) (d\zeta)^2$ has a well defined
residue which is the coefficient of $\zeta^{-2}$ in $f$. with a chance
of variable $\zeta=\zeta(\eta)$ which has order $n$ the residue is
multiplied by a factor of $n^2$.

Suppose now that $\zeta= \eta^n$. Then
\begin{align*}
  S_\eta(\zeta)&= 2 \frac{n(n-1)(n-2)\eta^{n-3}}{n\eta^{n-1}} -
  3\left(\frac{n(n-1)\eta^{n-2}}{n\eta^{n-1}}\right)^2\\
  &= \eta^{-2} \left(2(n-1)(n-2)-3 (n-1)^2\right) \\
  &= \eta^{-2}\left(2(n^2-3n+2)-3(n^2-2n+1)\right)\\
  &= \eta^{-2} (1-n^2)
\end{align*}
It follows that if the coefficient of $\zeta^{-2}$ in $S_\zeta(z)$ is
$\alpha$ then the coefficient of $\eta^{-2}$ in $S_\zeta(z)$ is
\begin{equation*}
  n^2 \alpha + (1-n^2)= 1- n^2(1-\alpha)
\end{equation*}
This leads to the following.
\begin{definition}
  The schwarzian residue of $\sigma=f(\zeta)(d\zeta)^2$, denoted
  $\res_S \sigma$,  is $1-$ the coefficient of
  $\zeta^{-2}$ in $f(\zeta)$.
\end{definition}
and we have
\begin{proposition}
  if $\zeta= \zeta(\eta)$ is a change of variables of degree $n$ then
  \begin{equation*}
    \res_S \sigma_\eta(z)= n^2 \res_S \sigma_\zeta(z)
  \end{equation*}
\end{proposition}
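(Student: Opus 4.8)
The plan is to treat $\res_S$ as a purely local invariant attached to the point where the change of variables $\zeta=\zeta(\eta)$ ramifies, and to read off its transformation law from the cocycle identity
$S_\eta(z)=(d\zeta/d\eta)^2 S_\zeta(z)+S_\eta(\zeta)$ already recorded above. I would fix the ramification point to be $\eta=0$, mapping to $\zeta=0$. By definition $\res_S\sigma_\eta(z)$ is $1$ minus the coefficient of $\eta^{-2}$ in $S_\eta(z)$, and likewise with $\zeta$; so it suffices to determine the coefficient of $\eta^{-2}$ in each of the two terms on the right-hand side of the cocycle identity and add them.

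First I would dispose of the term $(d\zeta/d\eta)^2 S_\zeta(z)$. This is exactly the honest quadratic differential $\sigma_\zeta(z)=S_\zeta(z)(d\zeta)^2$ expressed in the coordinate $\eta$. By the rule already stated for ordinary quadratic differentials, namely that a degree-$n$ change of variables multiplies the coefficient of the order-$(-2)$ pole by $n^2$, the coefficient of $\eta^{-2}$ in this term equals $n^2\alpha$, where $\alpha$ denotes the coefficient of $\zeta^{-2}$ in $S_\zeta(z)$.

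The remaining, genuinely new point is to compute the coefficient of $\eta^{-2}$ in $S_\eta(\zeta)$ for a general degree-$n$ map, not merely for the model $\zeta=\eta^n$ treated in the display above; I expect this local computation to be the main obstacle, since for $\zeta=\eta^n u(\eta)$ with $u(0)\neq 0$ the full Schwarzian $S_\eta(\zeta)$ acquires many extra terms from the unit $u$. The key simplification is that $S_\eta$ is unchanged when $\zeta$ is replaced by $a\zeta$ for a constant $a$ (a special case of its invariance under fractional linear transformations, the first part of the Proposition above), so I may normalize $u(0)=1$. Writing $v:=\zeta'/\eta^{n-1}$, a unit with $v(0)=n$, and $P:=\zeta''/\zeta'=(n-1)/\eta+v'/v$, one has $S_\eta(\zeta)=2P'-P^2$; since $v'/v$ is holomorphic at $\eta=0$, the only contributions to the $\eta^{-2}$-coefficient come from the polar part $(n-1)/\eta$ of $P$, giving $-2(n-1)-(n-1)^2=1-n^2$, while $v'/v$ affects only the residue at order $\eta^{-1}$ and the holomorphic part. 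This reproduces the value $1-n^2$ found for $\zeta=\eta^n$ and shows it is insensitive to the higher-order behavior of the map.

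Combining the two contributions, the coefficient of $\eta^{-2}$ in $S_\eta(z)$ is $n^2\alpha+(1-n^2)$, exactly as in the special case already displayed. Hence $\res_S\sigma_\eta(z)=1-\big(n^2\alpha+1-n^2\big)=n^2(1-\alpha)=n^2\,\res_S\sigma_\zeta(z)$, which is the asserted formula.
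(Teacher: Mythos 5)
Your proof is correct and follows the same route as the paper: split $S_\eta(z)$ via the cocycle identity $S_\eta(z)=\left(\frac{d\zeta}{d\eta}\right)^2 S_\zeta(z)+S_\eta(\zeta)$ and extract the coefficient of $\eta^{-2}$ from each of the two terms. The only difference is that the paper computes $S_\eta(\zeta)$ only for the model map $\zeta=\eta^n$ and tacitly takes for granted that a general map of local degree $n$ gives the same answer; your computation with $\zeta=\eta^n u(\eta)$, writing $P=\zeta''/\zeta'=(n-1)/\eta+v'/v$ and observing that the unit contributes only to the order-$\eta^{-1}$ and holomorphic parts of $S_\eta(\zeta)=2P'-P^2$, makes that step explicit, so your argument is if anything slightly more complete than the one in the text.
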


Note that at points where the differential is holomorphic the
Schwarzian residue is $1$ and not $0$. In our examples, the local
system pulls back to the Shimura local system $\Sh$ over the upper
half plane $\upH$, where it is holomorphic. Thus, the Schwarzian
residue is always $1/n^2$ for
some $n$. We call this $n$ the \emph{Schwarzian index} at the
point. It is, of course,
just the ellipticity index of the point.

In the following table we list for the examples we have the sigma
invariant (with respect to the parameter $\lambda$, neglecting the
$d\lambda^2$ term, and the Schwarzian indexes at points when it is
bigger than $1$ (where the original fibration had a singular fiber).

\begin{table}[htbp]
\[
\begin{array}{||l||l|l|l||}

  1& \cbn{ I_1 ,I_1 ,I_8 ,II }
  {\gamma,\bar{\gamma},\infty,0}
    { 2,2,2,6}&
    \dbn{ \frac{27 - 21\lambda  +
       6{\lambda }^2}{27\lambda  - 14{\lambda }^2 +
       3{\lambda }^3}}
    {\frac{3\left( -1 - 6\lambda  + 3{\lambda }^2
         \right) }{16{\lambda }^2
       \left( 27 - 14\lambda  + 3{\lambda }^2 \right) }}
    {\frac{3\left( 945 - 652\lambda  +
         142{\lambda }^2 - 60{\lambda }^3 +
         9{\lambda }^4 \right) }{4{\lambda }^2
       {\left( 27 - 14\lambda  + 3{\lambda }^2 \right)
           }^2}}\\\hline
  2& \cbn{ I_1 ,I_2 ,I_7 ,II }
  {\frac{-9}{4},\frac{-8}{9},\infty,0}
   { 2,2,2,6}&
   \dbn{ \frac{144 + 339\lambda  + 144{\lambda }^2}
     {144\lambda  + 226{\lambda }^2 + 72{\lambda }^3}}
    {\frac{-2 + 36\lambda  + 27{\lambda }^2}
     {4{\lambda }^2\left( 72 + 113\lambda  +
         36{\lambda }^2 \right) }}
    {\frac{20160 + 42008\lambda  + 41331{\lambda }^2 +
       17388{\lambda }^3 + 3888{\lambda }^4}{4
       {\lambda }^2{\left( 72 + 113\lambda  +
           36{\lambda }^2 \right) }^2}}\\\hline
  3& \cbn{ I_1 ,I_4 ,I_5 ,II }{ -10,0,\infty,\frac{1}{8}}
   { 2,2,2,6}&
   \dbn{ \frac{-5 + 119\lambda  + 16{\lambda }^2}
     {\lambda \left( -10 + 79\lambda  +
         8{\lambda }^2 \right) }}
    {\frac{6\left( -1 + 7\lambda  + 2{\lambda }^2
         \right) }{{\left( 1 - 8\lambda  \right) }^2
       \lambda \left( 10 + \lambda  \right) }}
    {\frac{3\left( 25 - 210\lambda  +
         2179{\lambda }^2 + 216{\lambda }^3 +
         16{\lambda }^4 \right) }{{\left( 1 -
           8\lambda  \right) }^2{\lambda }^2
       {\left( 10 + \lambda  \right) }^2}}\\\hline
  4& \cbn{ I_2 ,I_3 ,I_5 ,II }{\frac{-5}{9},0,\infty,3}
   { 2,2,2,6}&
   \dbn{ \frac{15 + 39\lambda  - 36{\lambda }^2}
     {30\lambda  + 44{\lambda }^2 - 18{\lambda }^3}}
    {\frac{-23 - 246\lambda  + 81{\lambda }^2}
     {48{\left( -3 + \lambda  \right) }^2\lambda
       \left( 5 + 9\lambda  \right) }}
    {\frac{2025 + 4295\lambda  + 9156{\lambda }^2 -
       1809{\lambda }^3 + 729{\lambda }^4}{12
       {\left( -3 + \lambda  \right) }^2{\lambda }^2
       {\left( 5 + 9\lambda  \right) }^2}}\\\hline
  5& \cbn{ I_1 ,I_1 ,I_7 ,III }
  {\delta,\bar{\delta},\infty,0}
  { 2,2,2,4}&
   \dbn{ \frac{64 + 39\lambda  + 16{\lambda }^2}
     {64\lambda  + 26{\lambda }^2 + 8{\lambda }^3}}
    {\frac{-2 + 4\lambda  + 3{\lambda }^2}
     {4{\lambda }^2\left( 32 + 13\lambda  +
         4{\lambda }^2 \right) }}
    {\frac{3840 + 2072\lambda  + 43{\lambda }^2 +
       220{\lambda }^3 + 48{\lambda }^4}{4
       {\lambda }^2{\left( 32 + 13\lambda  +
           4{\lambda }^2 \right) }^2}}\\\hline
  6& \cbn{ I_1 ,I_2 ,I_6 ,III }{ 4,1,\infty,0}
  { 2,2,2,4}&
   \dbn{ \frac{8 - 15\lambda  + 4{\lambda }^2}
     {2\lambda \left( 4 - 5\lambda  +
         {\lambda }^2 \right) }}
    {\frac{-1 - 6\lambda  + 3{\lambda }^2}
     {16{\lambda }^2
       \left( 4 - 5\lambda  + {\lambda }^2 \right) }}
    {\frac{3\left( 20 - 33\lambda  + 28{\lambda }^2 -
         7{\lambda }^3 + {\lambda }^4 \right) }{4
       {\lambda }^2{\left( 4 - 5\lambda  +
           {\lambda }^2 \right) }^2}}\\\hline
  7& \cbn{ I_1 ,I_3 ,I_5 ,III }
  {\frac{-25}{3},0,\infty,\frac{1}{5}}
   { 2,2,2,4}&
   \dbn{ \frac{25 - 369\lambda  - 60{\lambda }^2}
     {50\lambda  - 244{\lambda }^2 - 30{\lambda }^3}}
    {\frac{-167 + 630\lambda  + 225{\lambda }^2}
     {16{\left( 1 - 5\lambda  \right) }^2\lambda
       \left( 25 + 3\lambda  \right) }}
    {\frac{15\left( 125 - 675\lambda  +
         4244{\lambda }^2 + 501{\lambda }^3 +
         45{\lambda }^4 \right) }{4
       {\left( 1 - 5\lambda  \right) }^2{\lambda }^2
       {\left( 25 + 3\lambda  \right) }^2}}\\\hline
  8& \cbn{ I_2 ,I_3 ,I_4 ,III }{ \frac{-1}{3} ,0,\infty,1}
   { 2,2,2,4}&
   \dbn{ \frac{1 + 3\lambda  - 12{\lambda }^2}
     {2\lambda  + 4{\lambda }^2 - 6{\lambda }^3}}
    {\frac{-1 - 9\lambda  + 9{\lambda }^2}
     {16{\left( -1 + \lambda  \right) }^2
       \left( \lambda  + 3{\lambda }^2 \right) }}
    {\frac{3\left( 1 + 3\lambda  + 13{\lambda }^2 -
         6{\lambda }^3 + 9{\lambda }^4 \right) }{4
       {\left( -1 + \lambda  \right) }^2
       {\left( \lambda  + 3{\lambda }^2 \right) }^2}}
       \\\hline
   9& \cbn{ I_1 ,I_1 ,I_6 ,IV }{ 1,-1,\infty,0}
   { 2,2,2,3}&
   \dbn{ \frac{1 - 2{\lambda }^2}{\lambda  - {\lambda }^3}}
    {\frac{4 + 27{\lambda }^2}
     {144{\lambda }^2\left( -1 + {\lambda }^2 \right) }}
    {\frac{32 + 49{\lambda }^2 + 27{\lambda }^4}
     {36{\lambda }^2
       {\left( -1 + {\lambda }^2 \right) }^2}}\\\hline
  10& \cbn{ I_1 ,I_2 ,I_5 ,IV }
  { \frac{-27}{4} ,\frac{-1}{2} ,\infty,0}
   { 2,2,2,3}&
   \dbn{ \frac{27 + 87\lambda  + 16{\lambda }^2}
     {27\lambda  + 58{\lambda }^2 + 8{\lambda }^3}}
    {\frac{-3 + 16\lambda  + 6{\lambda }^2}
     {4{\lambda }^2\left( 27 + 58\lambda  +
         8{\lambda }^2 \right) }}
    {\frac{648 + 1824\lambda  + 3157{\lambda }^2 +
       476{\lambda }^3 + 48{\lambda }^4}{{\lambda }^2
       {\left( 27 + 58\lambda  + 8{\lambda }^2 \right)
           }^2}}\\\hline
  11& \cbn{ I_3 ,I_3 ,I_2 ,IV }{ \infty,0,-1,1}
{ 2,2,2,3}&
   \dbn{ \frac{-1 + \lambda  + 4{\lambda }^2}
     {2\left( -\lambda  + {\lambda }^3 \right) }}
    {\frac{-13 - 22\lambda  + 27{\lambda }^2}
     {144{\left( -1 + \lambda  \right) }^2
       \left( \lambda  + {\lambda }^2 \right) }}
    {\frac{27 + 5\lambda  + 64{\lambda }^2 +
       5{\lambda }^3 + 27{\lambda }^4}{36
       {\lambda }^2{\left( -1 + {\lambda }^2 \right) }^2}}

\end{array}
\]
\caption{Sigma invariants and Schwarzian indices}\label{sigmatable}
\end{table}

\section{The results of Elkies}
\label{sec:elkies}

In~\cite{Elk98} Elkies computes certain differential equations
associated with Shimura curves. While this is not stated explicitly,
these are exactly the Picard Fuchs equations associated with the
Shimura local system descended to the Shimura curve
because the quotient of the two solutions gives the coordinate $\tau$
on the upper half plane, just as for the Shimura local system, as in
Section~\ref{sec:generalities}.

The briefly list the types of Shimura curves considered. For more
information one may consult~\cite{Elk98} or~\cite{Bes-Liv10} (our
notation is consistent with the latter reference). For each
discriminant $D$ (always the product of an even number of primes) the
Shimura curve $V_D$ is the quotient of the upper half plane by the
group $\Gamma$ of norm one elements in a maximal order in a quaternion
algebra of discriminant $D$ (see the introduction). For each prime
$p|D$ it carries a modular involution $w_p$ and these involutions commute
with each other so that we can also set $w_n= \prod_{p|n} w_p$ for
$n|D$. We let $\vstar{D}$ be the quotient of $V_D$ by the group
generated by all its modular involutions. Finally, for a prime $p$
which does not divide $D$ there is a modular curve $V_{D,p}$, which
corresponds to an additional ``$\Gamma_0(p)$'' structure, This retains
all of the previous involutions but has an additional involution
$w_p$.

In table~\ref{elkies} we give, for the relevant curves, the
equation that Elkies finds, the associated sigma invariant and the
Schwarzian indices at the relevant points. The
equations of Elkies are in non-normalized form $ay''+by'+cy=0$, so one
needs to normalize first before computing the sigma invariant.
\begin{table}[htbp]
\[
\begin{array}{||l||l|l|l||}

  \vstar{10}
&
\ebn{ 
t (t - 2) (t - 27)y''+  \frac{10 t^2 - 203 t +
216}{6} y'+(\frac{7 t}{144} - \frac{7}{18})y=0}
{\frac{10368 - 7296\,t + 3157\,t^2 - 119\,t^3 +
3\,t^4} {4\,{\left( -27 + t \right) }^2\,
      {\left( -2 + t \right) }^2\,t^2}
}

&
 \cbn{ I_1 ,I_1 ,I_8 ,II }
  {27,2,\infty,0}
    { 2,2,2,3}
\\\hline

\vstar{14}

&
\ebn{
t (16 t^2 + 13 t + 8)y''+ (24 t^2 + 13 t + 4)y'+
(\frac{3}{4} t + \frac{3}{16}) y=0}
{
\frac{192 + 440\,t + 43\,t^2 + 1036\,t^3 +
960\,t^4} {4\,t^2\,{\left( 8 + 13\,t + 16\,t^2
\right) }^2}
}

&
 \cbn{ I_1 ,I_1 ,I_8 ,II }
  {\delta_1,\bar{\delta}_1,0,\infty}
    { 2,2,2,4}
\\\hline

\vstar{15}

&
\ebn{
(t-81) (t-1) t y''+ (\frac{3 t^2}{2}-82 t+\frac{81}{2})y'+
(\frac{t}{18}-\frac{1}{2}) y=0}
{
\frac{35 t^4-3680 t^3+244242 t^2-244944 t+177147}{36 (t-81)^2 (t-1)^2 t^2}
}

&
 \cbn{ I_1 ,I_1 ,I_8 ,II }
  {1,81,0,\infty}
    { 2,2,2,6}
\\\hline
\end{array}
\]
 \caption{Elkies's list of differential equations}\label{elkies}
\end{table}

Here $\delta_1$ is a solution to the equation $16t^2+13t+8=0$.

For discriminant 6 Elkies does not write down the equation
explicitely, though he gives a recepy to discover one of 4 possible
equations. As he notes, however, since there are only 3 elliptic
points, the sigma invariant is uniquely determined by the indexes
of ellipticity. Suppose that these are at $t=0,1,\infty$. The most
general form of $\sigma$ is 
\begin{equation*}
  \sigma =
  \left(\frac{a}{t^2}+\frac{b}{(t-1)^2}+\frac{c}{t}+\frac{d}{t-1}\right) (dt)^2
\end{equation*}
and one has the condition $c+d=0$ to avoid a pole of order $3$ at
$\infty$. The residues are $a,b$ and $a+b+d$ at $0$, $1$ and
$\infty$ respectively, from which all the coefficients are easily
determined. In the case at hand, Elkies chooses the coordinate $t$ so
that the indices are $2,4,6$ at  $0$, $1$ and
$\infty$ respectively. This gives
\begin{equation*}
  \sigma =
  \left(\frac{3}{4t^2}+\frac{15}{16(t-1)^2}
    +\frac{103}{144t}-\frac{103}{144(t-1)}\right)
  (dt)^2 
\end{equation*}

\section{comparison with the results of elkies}
\label{sec:comparison}
In this section we compare Elkies's list with the list of differential
equations we obtained in Section~\ref{sec:k3}. As explained in the
introduction, each of these examples is a family of varieties over
$\PP^1$ and there is a correspondence between these $\PP^1$ and a
Shimura curve of some (computable) discriminant.

The above correspondece is compatible with the Shimura local system
$\Sh$. This implies that the correspondence is going to map the sigma
invariants on Elkies's list to the corresponding sigma invariant of
the families. Here we demonstrate how one can use this to guess the
correct correspondence. This is not a proof that the correspondence is
the correct one, which then needs to be established by more precise
means~\cite[Section~8]{Bes-Liv10}. It can nevertheless
be used to exclude
certain possible correspondenced (see Subsection~8.2 in the
above reference).

No. 10  - Corresponds to discriminant $10$. The correspondence has to
carry the special points of the
fibration at $\lambda=-27/4,-1/2,\infty,0 $ with respective Schwarzian
indices $2,2,2,3$ to the special points $t=27,2,\infty,0 $ with the
same respective indices for the equation that Elkies finds for
$V_{10}/(w_5,w_2)$. It is trivial to guess the relation
$t=-4\lambda$. A change of variables for the sigma invariants confirms
this, It can be proved rigorously
(see~\cite[Subsection~8.3]{Bes-Liv10}) that the $\lambda$-line
is isomorphic to $V_{14}/(w_2,w_5)$

No. 5  - Corresponds to discriminant $14$. The correspondence has to
carry the special points of the
fibration at $\lambda= \delta,\bar{\delta},\infty,0 $ with respective Schwarzian
indices $2,2,2,4$ to the special points
$t=\delta_1,\bar{\delta}_1,0,\infty $ with the
same respective indices for the equation that Elkies finds for
$V_{14}/(w_7,w_2)$. Since $\delta$ is a solution of the equation
$4x^2+13x+32=0$  it is easy to guess the relation
$t=2/\lambda$. A change of variables for the sigma invariants confirms
this. It can be proved rigorously
(see~\cite[Subsection~8.1]{Bes-Liv10}) that the $\lambda$-line
is indeed isomorphic to the Shimura curve $V_{14}/(w_2,w_7)$.

No. 3 - Corresponds to discriminant $15$.
 The correspondence has to
carry the special points $t=1,81,0,\infty $  with respective Schwarzian
indices $2,2,2,6$  for the equation that Elkies finds for
$V_{15}/(w_5,w_3)$ to the special points of the 
fibration at $\lambda=-10,0,\infty,1/8$ with the same indices. This
can be done with the change of variables $\lambda=\frac{t-81}{8t}$ and
a change of variables for the sigma invariants confirms
this. It can be proved rigorously (see~\cite[Lemma~3.8.2]{Bes-Liv10}).

No. 4 - Corresponds to discriminant $10$. In this case we speculated
(but could not prove) that the $\lambda$-line was the curve
$V_{10,3}/\pair{w_2,w_5,w_3} $. Here we show this is consistent with
the Picard-Fuchs equations. According to Elkies, the curve
$V_{10,3}/\pair{w_2,w_5} $ is a degree $4$ cover of $\vstar{10}$,
rational with a coordinate $x$ such that
\begin{equation*}
  t=\frac{{\left( -6 + 6\,x
\right) }^3} {{\left( 1 + x \right) }^2\, \left(
17 - 10\,x + 9\,x^2 \right) }
\end{equation*}
(this is equation (57) in~\cite{Elk98} but the $7$ there should be
corrected to $17$, as for example in the computation between equations
(59) and (60)). From the expression
\begin{equation*}
  \frac{6^3}{9\tau + 8} \text{ with } \tau= \frac{(3x^2+5)^2}{9(x-1)^3}
\end{equation*}
which is also in (57) there it is easy to see that the map $x\to t$
sends $1,\infty,-1, 5$ to $0,0,\infty,2$ 
with multiplicities $3,1,2, 2$ respectively, $\pm \sqrt{-5/3}$ to
$27$ with multiplicity $2$, the two roots of $9x^2-10x+5=0$ to $2$
with multiplicity $1$, and the two roots of $9x^2-10x+17=0$ to $\infty$
with multiplicity $1$. Thus, the elliptic points for
$V_{10,3}/\pair{w_2,w_5} $ are going to
be at $x=\infty$ with multiplicity $3$ and at the roots of the
equations  $9x^2-10x+5=0$ and $9x^2-10x+17=0$ with multiplicity
$2$. The involution $w_3$ is given by Elkies, just after (57), to be
$w_3(x)= \frac{10}{9}-x$ and so a coordinate on the quotient is given
by 
$$\zeta= 9\left(x-\frac{5}{9}\right)^2 = 9x^2-10x+ \frac{25}{9}$$
We see that the elliptic points will map to
$\zeta= \infty, -20/9, -128/9$, so these will be elliptic of degree
$6,2,2$, and in addition the ramification point $0$ is elliptic of
degree $2$. We can map $\zeta$ to $\lambda$ with the correct orders by
\[ \lambda = 3- \frac{128}{3\zeta + \frac{128}{3}} =  3-
\frac{128}{3(9x^2-10x+17)}\;. \]
This is confirmed by the matching of the sigma invariants.

Other examples correspond to discriminant $6$. Some of them are
directly interrelated.
 Consider examples number 6 and 8. The special points are
 $\lambda_1=4,1,\infty,0$ and
$\lambda_2=-1/3,0,\infty,1$ respectively with the same indices. There
is a finite number of ways for carying one set to the other preserving
the indices, and testing each one using the sigma invariants we get
the correct transformation  $\lambda_1= 1 -
1/\lambda_2$. It
turns out that (see~\cite[Subsection~8.7]{Bes-Liv10}) that
making this change of variable makes the two base elliptic fibrations
isogenous.
 
Consider next examples 9 and 11.
The special points in both cases are $\infty,0,-1,1$ but with indices
$2,3,2,2$ in example 9 and $2,2,2,3$ in example 11. Testing again the
finite number of possible transformations with the sigma invariants
gives $\lambda_2=(1 + \lambda_1)/(1 - \lambda_1)$. It is proved
in~\cite[Subsection~8.8]{Bes-Liv10} that this again makes the
two base fibrations isogenous.

No. 6 - For $V_6/(w_2,w_3)$ it turns out to be slightly better to work
with the coordinate $\zeta = 1/(1-t)$ so that the elliptic points are
at $\zeta= 0$, $1$ and
$\infty$ with indices $6$, $2$ and $4$
respectively. To get the required ellipticity behavior for the
$\lambda$-line, with elliptic points at $\lambda=4,1,\infty,0$ with
indices $2,2,2,4$, one may consider a degree 3 map having ramification
type $(2,1)$ over $\infty$, producing indices $2$
and $4$, ramification $3$ above $0$, producing an
additional index $2$ and ramification type
$(1,2)$ above $1$, producing one additional index
$2$ and an additional non-elliptic point. This can be aranged by a map
of the form $\zeta = c\lambda^{-1} ((\lambda-1)^3$ for the appropriate
$c$ for which this ramifies above $1$. So $c$ is the value for which
one of the roots of the derivative $(c(\lambda-1)^3-\lambda)'=
3c(\lambda-1)^2-1 $ is mapped to $\zeta=1$. We have for that root
\begin{equation*}
  1 = c\frac{(\lambda-1)^3}{\lambda} = \frac{\lambda-1}{3\lambda}
\end{equation*}
hence $\lambda = -1/2$ and $c=4/27$. Consider the equation for
$\lambda$ to map to $\zeta=1$. As an equation on $\lambda-1$ the sum
of the 3 roots should be $0$, hence the third root should be $3$, so
that the additional preimage of $1$ is $4$. Thus, the cover we found
matches perfectly with the $\lambda$-line. Summarizing, we have
\begin{equation*}
  t= 1-\frac{1}{\zeta} = 1-\frac{27 \lambda}{4(\lambda-1)^3}
\end{equation*}
This is confirmed by the sigma invariants.

No. 9 - Here the elliptic points are at $1,-1,\infty$ and $0$ with
indices $2,2,2$ and $3$. In trying to relate them with the elliptic
points for $\zeta$ it is very easy to guess the relation
$\zeta=\lambda^2$, and this is confirmed by the
$\sigma$-invariants. Thus, the $\lambda$-line is a double cover of
$\vstar{6}$ ramified above the elliptic points of order $4$ and
$6$. This was used in~\cite[Subsection 8.2]{Bes-Liv10} to prove that
the $\lambda$-line is 
$V_6/\pair{w_6}$.

No. 7 - The elliptic points are $-25/3,0,\infty,1/5$
and with indices $2$ at the first $3$ points and
$4$ at the last point.
Here we guess that the base for the family of
twists is isomorphic to the quotient
$V_{6,5}/\pair{w_2,w_3,w_5}$. Elkies find a coordinate $x$ on
$V_{6,5}/\pair{w_2,w_3}$ for which the action
of $w_5$ is given by \cite[(37)]{Elk98}
$w_5(x)=(42 - 55 x)/(55 + 300 x)$. The two fixed
points of this action are $7/30$ and $-3/5$.
Thus, a coordinate on the quotient is provided by
\begin{equation}\label{dsix1}
  y=((x + 3/5)/(x - 7/30))^2\;.
\end{equation}
The map from $X_0^\ast(5)$ to $V_6/(w_2,w_3)$ is
given by \cite[Equation 36]{Elk98} by
\begin{equation*}
  t=( 1 + 3x + 6x^2 )^2
  ( 1 - 6x + 15x^2)=1+27x^4 ( 5 + 12x + 20x^2 )\;.
\end{equation*}
The relation between $t$ and $\zeta$ is
\begin{equation}\label{dsix2}
\zeta= 1/(1-t)=\frac{-1}{27x^4 ( 5 + 12x +
20x^2)}\;.
\end{equation}
The ramification above $\zeta=0$ is of order $6$ at
infinity. The ramification over $\zeta=\infty$ is
of order $4$ at $x=0$ and order $1$ at each of the
roots of $5 + 12x + 20x^2$. The ramification over
$\zeta=1$, or $t=0$, is of order $1$ at each of the
roots of $1 - 6x + 15x^2$ and of order $2$ at each
of the roots of $ 1 + 3x + 6x^2$. Thus the elliptic
points of the cover are of order $4$ at the roots
of $ 5 + 12x + 20x^2$ and of order $2$ at  each of
the roots of $1 - 6x + 15x^2$. These two pairs of
points are interchanged by $w_5$. The elliptic
points of order $4$ are mapped to $y=-9/16$ while
those of order $2$ are mapped to $y=-24$. In
addition we get elliptic points at the ramification
points of the covering at $y=0$ and $y=\infty$,
both of order $2$. Now, if we guessed correctly,
there would be a M\"obius transformation sending
the 4 elliptic points to the 4 singular points of
the elliptic surface, sending  $y=-9/16$ to
$\lambda=1/5$ . It is easy to check that the unique
transformation of this type is $\lambda=-25 y/(3
(24+y))$. Composing with \eqref{dsix1} we get
\begin{equation*}
\lambda=\frac{-( 3 + 5x)^2}{5 ( 1 - 6x +
15x^2)}\;.
\end{equation*}
This, as usual, is confirmed by Pulling back the
$\sigma$-invariants. Our guess can be proved
rigorously~\cite[Subsection~8.6]{Bes-Liv10}.

No. 2 - The elliptic points are at $-9/4,-8/9,\infty,0$ with indices
$2,2,2,6$ respectively.

We try to guess that this family corresponds to
$V_{6,7}/\pair{w_2,w_3,w_7}$. Elkies write a coordinate $x$ on
$V_{6,7}/\pair{w_2,w_3}$ for which  the action of $w_7$ is given by
\cite[(40)]{Elk98} $w_7(x)=(116 - 9 x)/(9
+ 20x)$. The two fixed points of this action are
$-29/10$ and $2$. Thus, a coordinate on the
quotient is provided by
\begin{equation}\label{dsix4}
  y=\left(\frac{x + 29/10}{x - 2}\right)^2\;.
\end{equation}
The map from $V_{6,7}/\pair{w_2,w_3}$ to
$\vstar{6}$ is given by \cite
[(39)]{Elk98} by
\begin{equation*}
t= \frac{- \left( 25 + 4\,x + 4\,x^2 \right) \,
{\left( 2 - 12\,x + 3\,x^2 - 2\,x^3 \right) }^2
}{108\,\left( 37 - 8\,x + 7\,x^2 \right) }=
1-\frac{(2 x^2 - x + 8)^4}{108 (7 x^2 - 8 x +
37)}\;.
\end{equation*}
This looks nicer with $\zeta$
\begin{equation}\label{dsix5}
\zeta= 1/(1-t)=\frac{108\,\left( 37 - 8\,x +
7\,x^2 \right) }
  {{\left( 8 - x + 2\,x^2 \right) }^4}\;.
\end{equation}
The preimage of  $\zeta=0$ is $6$ times
$\infty$ plus the two roots of $7
x^2 - 8 x + 37$. The preimage of
$\zeta=\infty$ is $4$ times each of the
roots of $2 x^2 - x + 8$. The preimage of
$\zeta=1$, or $t=0$, is 2 times  each of
the roots of $ 2 - 12x + 3x^2 - 2x^3$ plus each of the
roots of $4x^2+4x+25$. Thus the
elliptic points of $V_{6,7}/\pair{w_2,w_3}$ are
the two roots of $7 x^2 - 8 x + 37$ with index $6$ and the roots of
$4x^2+4x+25$ with index $2$.
two pairs of points are interchanged by $w_7$. The
elliptic points of order $6$ are mapped to
$y=-243/100$ while those of order $2$ are mapped to
$y=-24/25$. In addition we get elliptic points at
the ramification points of the covering at $y=0$
and $y=\infty$, both of order $2$. Now, if we
guessed correctly, there would be a M\"obius
transformation sending the 4 elliptic points to the
4 singular points of the elliptic surface, sending
$y=-243/100$ to $\lambda=0$ . It is easy to check
that the unique transformation of this type is
\begin{equation*}
  \lambda=\frac{486 + 200y}{-216 - 225y}\;.
\end{equation*}
Composing with \eqref{dsix4} we get
\begin{equation*}
\lambda=\frac{-8\,\left( 37 - 8\,x + 7\,x^2
\right) }
  {9\,\left( 25 + 4\,x + 4\,x^2 \right) }\;.
\end{equation*}
This is confirmed by the $\sigma$-invariants. Our guess can be proved
rigorously~\cite[Subsection~8.5]{Bes-Liv10}.

\section{Software}
\label{sec:software}

All the relevant computations for this work can be downloaded from
\url{http://www.math.bgu.ac.il/~bessera/picard-fuchs/}. They are in the form of a MATHEMATICA notebook. The notebook
is self explanatory. It loads the following files:
\begin{itemize}
\item pf.m - main file containing all the algorithms
\item data.m - file contains the equations for the elliptic fibrations
  on Herfurtner's list
\item elkiesdata.m - contains the differential equations obtained by Elkies.
\end{itemize}

The relevant functions contained in the file pf.m
\begin{itemize}
\item picfucs function - computes the Picard-Fuchs equation for an
  elliptic surface. This is just a translation into Mathematica of the
  Maple script by Beukers, which may be found at
\url{http://www.staff.science.uu.nl/~beuke106/picfuchs.maple}
\item Twistpf function - computes the Picard-Fuchs equation for the
  family of twists given the equation for the original elliptic fibration.
\item SigChVar function - makes a change of variable for the sigma invariant.
\item DRes function - computes the residue of a quadratic differential.
\end{itemize}


\begin{thebibliography}{10}

\bibitem{BHPV04}
W.~P. Barth, K.~Hulek, C.~A.~M. Peters, and A.~Van~de Ven.
\newblock {\em Compact complex surfaces}, volume~4 of {\em Ergebnisse der
  Mathematik und ihrer Grenzgebiete. 3. Folge. A Series of Modern Surveys in
  Mathematics [Results in Mathematics and Related Areas. 3rd Series. A Series
  of Modern Surveys in Mathematics]}.
\newblock Springer-Verlag, Berlin, second edition, 2004.

\bibitem{Bes95}
A.~Besser.
\newblock {CM} cyles over {S}himura curves.
\newblock {\em Journal of Algebraic Geometry}, 4:659--693, 1995.

\bibitem{Bes-Liv10}
A.~Besser and R.~Livne.
\newblock Universal {K}ummer families over {S}himura curves.
\newblock Preprint, 2010.

\bibitem{Clem03}
C.~H. Clemens.
\newblock {\em A scrapbook of complex curve theory}, volume~55 of {\em Graduate
  Studies in Mathematics}.
\newblock American Mathematical Society, Providence, RI, second edition, 2003.

\bibitem{Elk98}
N.~D. Elkies.
\newblock Shimura curve computations.
\newblock In {\em Algorithmic number theory (Portland, OR, 1998)}, pages 1--47.
  Springer, Berlin, 1998.

\bibitem{Her91}
S.~Herfurtner.
\newblock Elliptic surfaces with four singular fibers.
\newblock {\em Math. Ann.}, 291:319--342, 1991.

\bibitem{Iha74}
Y.~Ihara.
\newblock Schwarzian equations.
\newblock {\em J. Fac. Sci. Univ. Tokyo Sect. IA Math.}, 21:97--118, 1974.

\bibitem{Ked01}
K.~Kedlaya.
\newblock Counting points on hyperelliptic curves using {M}onsky-{W}ashnitzer
  cohomology.
\newblock {\em J. Ramanujan Math. Soc.}, 16(4):323--338, 2001.

\bibitem{Kod62}
K.~Kodaira.
\newblock On compact analytic surfaces:{II}.
\newblock {\em Ann. of Math.}, 77:563--626, 1962.

\bibitem{Pet86}
C.~Peters.
\newblock Monodromy and {P}icard-{F}uchs equations for families of
  ${K}3$-surfaces and elliptic curves.
\newblock {\em Ann. Sci. \'Ecole Norm. Sup. (4)}, 19(4):583--607, 1986.

\bibitem{Zuck79}
S.~Zucker.
\newblock Hodge theory with degenerating coefficients. {$L_{2}$} cohomology in
  the {P}oincar\'e metric.
\newblock {\em Ann. of Math. (2)}, 109(3):415--476, 1979.


\end{thebibliography}
\end{document}